\newtheorem{thm}{Theorem}[section]
\newtheorem{cor}[thm]{Corollary}
\newtheorem{lem}[thm]{Lemma}
\newtheorem{prop}[thm]{Proposition}
\theoremstyle{remark}
\newtheorem*{rem}{Remark}
\newcounter{remarkscounter}
\newenvironment{remarks}
{\medskip\noindent{\it
Remarks.}\begin{list}{{\rm(\arabic{remarkscounter})}
}{\usecounter{remarkscounter}

\setlength{\labelsep}{\fill} \setlength{\leftmargin}{0pt}
\setlength{\itemindent}{\fill}
\setlength{\labelwidth}{\fill}\setlength{\topsep}{0pt}
\setlength{\listparindent}{0pt}}} {\end{list}}
\numberwithin{equation}{section}
\newcommand{\A}{\mathbb{A}}
\newcommand{\GL}{\mathrm{GL}}
\newcommand{\ZZ}{\mathbb{Z}}
\newcommand{\Gal}{\mathrm{Gal}}
\newcommand{\QQ}{\mathbb{Q}}
\newcommand{\lto}{\longrightarrow}
\newcommand{\OO}{\mathcal{O}}
\newcommand{\CC}{\mathbb{C}}
\newcommand{\RR}{\mathbb{R}}
\newcommand{\quash}[1]{}
\theoremstyle{definition}
\newtheorem{defn}[thm]{Definition}
\newcommand{\rtr}{\mathrm{rtr}}
\renewcommand{\bar}{\overline}
\numberwithin{equation}{subsection}
\begin{document}
\title[General simple relative trace formula]{A general simple relative trace formula}
\author{Jayce R. Getz}
\author{Heekyoung Hahn}
\address{Department of Mathematics, Duke University, Durham, NC 27708}
\email{jgetz@math.duke.edu}
\address{Department of Mathematics, Duke University, Durham, NC 27708}
\email{hahn@math.duke.edu}

\subjclass[2010]{Primary 11F70\,;  Secondary 35P20}

\thanks{The first author is thankful for partial support provided by NSF grants DMS 1405708.  Any opinions, findings, and conclusions or recommendations expressed in this material are those of the author and do not necessarily reflect the views of the National Science Foundation.}

\begin{abstract}
In this paper we prove a relative trace formula for all pairs of connected algebraic groups $H \leq G \times G$ with $G$ a reductive group and $H$ the direct product of a reductive group and a unipotent group given that the test function satisfies simplifying hypotheses.
As an application, we prove a relative analogue of the Weyl law, giving an asymptotic formula for the number of eigenfunctions of the Laplacian on a locally symmetric space associated to $G$ weighted by their $L^2$-restriction norm over a locally symmetric subspace associated to $H_0 \leq G$.
\end{abstract}

\maketitle

\tableofcontents

\section{Introduction}\label{intro}

Let $G$ be a connected reductive algebraic group over a number field $F$ and let $A_G$ be the neutral component of the real points of the greatest $\QQ$-split torus in the center of $\mathrm{Res}_{F/\QQ}G$. Throughout this paper, we let 
$$
H \leq G \times G
$$
be a connected algebraic subgroup such that $H$ is the direct product of a reductive group and a unipotent group; both of these groups are necessarily connected.  We do not assume that the decomposition of $H$ into a reductive and unipotent group is compatible with the embedding $H \hookrightarrow G\times G$.

Let $\chi:H(\A_F) \to \CC^\times$ be  a quasi-character trivial on $A_{G, H}H(F)$ (see \S \ref{ssec-A} for the definition of $A_{G, H}$ and the other $A_?$ groups; they are all central subgroups).  Let
 $$
 \phi \in  L^2_{\mathrm{cusp}}(A_GG(F) \backslash G(\A_F) \times A_GG(F) \backslash G(\A_F))
 $$ be a smooth cusp form, and let
\begin{align} \label{period}
\mathcal{P}_{\chi}(\phi):=\int_{A_{G, H}H(F)\backslash H(\A_F)} \chi(h_\ell,h_r)\phi(h_\ell,h_r) d(h_\ell,h_r)
\end{align}
whenever this period is well-defined (for a criterion see Corollary \ref{cor-autom-form} below).  Here $d(h_{\ell},h_r)$ is a Haar measure; we will set our conventions on Haar measures in \S \ref{ssec-Haar} below.
The relative trace formula is a tool for studying the period integrals $\mathcal{P}_{\chi}(\phi)$.  
 Many particular instances of the relative trace formula have been developed, but the development has not been systematic. 

 In this paper we establish the formula in what we view is the natural level of generality in terms of the subgroup $H$ for test functions satisfying the usual ``simple trace formulae'' hypotheses.  In particular, we only make the assumption that $H$ is connected and a direct product of a reductive and unipotent group.  In contrast, in all references known to the authors the subgroup $H$ is assumed to be ``large'' e.g.~spherical and satisfy other simplifying hypotheses.  We also note that this greater generality is not vacuous in that it leads to new applications, for example, Theorem \ref{thm-Weyl-intro} below.  It is also used in constructing the four-variable automorphic kernel functions of \cite{Getz4}.
 
For $f \in C_c^\infty(A_G\backslash G(\A_F))$ let
\begin{align*}
R(f):L^2(A_G G(F) \backslash G(\A_F)) &\lto L^2(A_G G(F) \backslash G(\A_F))\\
\varphi &\longmapsto \left(x \mapsto \int_{A_G \backslash G(\A_F)}f(g)\varphi(xg)dg\right)
\end{align*}
denote the operator defined by the right regular action and $f$. We prove the following theorem:
\begin{thm}\label{thm-stf}
Let $f  \in C_c^\infty(A_G\backslash G(\A_F))$ be a function such that $R(f)$ has cuspidal image and such that if the $H(\A_F)$-orbit of $\gamma \in G(F)$ intersects the support of $f$ then $\gamma$ is elliptic, unimodular and closed.
Then
$$
 \sum_\gamma \tau(H_\gamma)\mathrm{RO}^{\chi}_\gamma(f) = \sum_\pi \rtr\,\pi(f)
$$
where the sum on $\gamma$ is over elliptic unimodular closed relevant classes and the sum on $\pi$ is over isomorphism classes of cuspidal automorphic representations of $A_G \backslash G(\A_F)$.
\end{thm}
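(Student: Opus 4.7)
The plan is to follow the standard relative-trace-formula strategy: form the automorphic kernel
$$K_f(x,y) = \sum_{\gamma \in G(F)} f(x^{-1} \gamma y)$$
representing $R(f)$, integrate it against $\chi$ over $[H] := A_{G,H} H(F) \backslash H(\A_F)$ to produce
$$I(f) := \int_{[H]} K_f(h_\ell, h_r)\,\chi(h_\ell, h_r)\, d(h_\ell, h_r),$$
and then expand $I(f)$ in two ways. The geometric expansion should produce the left-hand side of the identity and the spectral expansion the right-hand side.

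For the geometric side, I would decompose the sum defining $K_f$ according to $H(F)$-orbits on $G(F)$ for the action $(h_\ell, h_r)\cdot \gamma = h_\ell^{-1}\gamma h_r$. The simplifying hypothesis on $f$ forces every $\gamma$ contributing nontrivially to have closed adelic orbit, unimodular stabilizer $H_\gamma$, and elliptic quotient $A_{G,H_\gamma}H_\gamma(F)\backslash H_\gamma(\A_F)$ of finite volume $\tau(H_\gamma)$; these are exactly the three properties required to interchange summation with integration and to unfold each orbit into the relative orbital integral $\mathrm{RO}^\chi_\gamma(f)$ on $H_\gamma(\A_F)\backslash H(\A_F)$. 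Orbits that are not $\chi$-relevant contribute zero because, after unfolding, $\chi$ restricts to a nontrivial character on a compact quotient of the stabilizer, so only relevant classes survive.

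For the spectral side, the cuspidality hypothesis on $R(f)$ says that $K_f$ agrees with its cuspidal projection, and hence admits the absolutely convergent expansion
$$K_f(x,y) = \sum_{\pi}\sum_{\phi \in \mathcal{B}(\pi)} (\pi(f)\phi)(x)\,\overline{\phi(y)},$$
where $\pi$ runs over isomorphism classes of cuspidal automorphic representations of $A_G\backslash G(\A_F)$ and $\mathcal{B}(\pi)$ is an orthonormal basis of the space of $\pi$. Integrating term by term against $\chi$ over $[H]$, and invoking Corollary \ref{cor-autom-form} to ensure each resulting period is well-defined, one obtains $\sum_\pi \rtr\,\pi(f)$ by the definition of the relative trace as the corresponding sum of $\mathcal{P}_\chi$-values.

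The main obstacle will be justifying both term-by-term integrations rigorously. Geometrically, one must verify that only finitely many orbit representatives can meet any compact subset of $G(\A_F)$ supporting $f$ modulo the $H(\A_F)$-action (this is where the ellipticity-plus-closedness hypothesis is indispensable), and one must carefully track the interplay of the central subgroups $A_G$, $A_{G,H}$, and $A_{G,H_\gamma}$ so that Fubini reassembles the orbital contribution cleanly as $\tau(H_\gamma)\,\mathrm{RO}^\chi_\gamma(f)$. Spectrally, the delicate point is interchanging the sum over $\pi$ with integration over the non-compact quotient $[H]$; rapid decay of cusp forms in Siegel domains, combined with the period-convergence criterion supplied by Corollary \ref{cor-autom-form}, is what should supply the required absolute convergence.
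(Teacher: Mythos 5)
Your overall strategy is exactly the paper's: form the kernel $K_f(x,y) = \sum_{\gamma\in G(F)} f(x^{-1}\gamma y)$, integrate over $A_{G,H}H(F)\backslash H(\A_F)$ against $\chi$, and compare the geometric and spectral expansions. The paper packages these as Theorem \ref{thm-geo-side} and Theorem \ref{thm-spectral-side}. However, there is a genuine logical flaw in your spectral step, and several substantive technical ingredients you flag as obstacles are precisely the content of the paper's argument, so you should be aware of what is actually needed.

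On the spectral side, you invoke Corollary \ref{cor-autom-form} to justify that each period $\mathcal{P}_\chi(K_{\pi(f)})$ converges. This is circular: the paper deduces Corollary \ref{cor-autom-form} from Theorem \ref{thm-spectral-side}, not the other way around. Moreover the corollary only covers $K_\infty$-finite $\varphi$, whereas the basis $\mathcal{B}(\pi)$ consists of smooth but not necessarily $K_\infty$-finite vectors, which is exactly the generality the paper emphasizes having to establish. The correct route is the one you gesture at in your last sentence but don't carry out: use the Dixmier--Malliavin factorization $f = f_1*f_2*f_3$ to realize $K_{\pi(f)}$ as a smooth function, then a Cauchy--Schwarz/Selberg trick to bound $|K_{\pi(f)}(x,y)|^2$ by diagonal kernels $K_{\pi(f_1^**f_1)}(x,x)K_{\pi(f_2*f_2^*)}(y,y)$, and finally Godement's uniform rapid-decay estimate for the cuspidal spectrum (Theorem \ref{thm-godement}) together with the fact that $\sum_\pi \mathrm{tr}\,\pi(h_3^**h_3)<\infty$ to get rapid decrease of $\sum_\pi |K_{\pi(f)}(x,y)|$; Proposition \ref{agr} then gives absolute convergence of the $\mathcal{P}_\chi$-integral. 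Without some such argument the term-by-term integration is unjustified.

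On the geometric side your outline is sound but leaves unaddressed the two points the paper regards as the real content. First, finiteness of the set of classes whose $H(\A_F)$-orbit meets a compact set (Proposition \ref{prop-finite-class}) is not routine: the paper proves it by separating geometric classes via the GIT quotient and then controlling the classes within a geometric class via a properness statement for $H^1(F,H_\gamma)\to H^1(\A_F,H_\gamma)$, and it requires care precisely because $H_\gamma$ need not be connected. Second, when $H$ has a nontrivial unipotent factor $N_H$, the paper reduces to the reductive case by integrating over a compact fundamental domain for $N_H(F)\backslash N_H(\A_F)$, producing an auxiliary test function $\widetilde f$; your argument does not address the unipotent factor at all. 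Also, a small inaccuracy: the vanishing of irrelevant contributions comes from orthogonality of characters on the quotient $AH_\gamma(F)\backslash H_\gamma(\A_F)$, which is finite volume (by ellipticity) but not in general compact as you state.
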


\noindent Here elliptic, unimodular and closed are defined as in \S \ref{ssec-rel-classes}, the action of $H$ on $G$ is given in \eqref{action}, and relevant is defined as in \S \ref{ssec-roi}.  Moreover, 
$\tau(H_{\gamma})$ is a volume term that can be viewed as a Tamagawa number if normalized appropriately, $\mathrm{RO}_{\gamma}^\chi(f)$ is a relative orbital integral (see \S \ref{sec-geo-side} for both of these notions), and $\rtr\,\pi(f)$ is the relative trace of $\pi(f)$, defined in \eqref{rtr} (it is a period integral of the form \eqref{period}).  Moreover a cuspidal automorphic representation $\pi$ of $A_G \backslash G(\A_F)$, by convention, is an automorphic representation of $G(\A_F)$ trivial on $A_G$ that can be realized in $L^2_{\mathrm{cusp}}(A_GG(F) \backslash G(\A_F))$.  In particular, we do not fix an embedding; the definition of $\rtr\,\pi(f)$ involves the entire $\pi$-isotypic subspace of $L^2_\mathrm{cusp}(A_G G(F) \backslash G(\A_F))$.

\begin{remarks} 

\item Given work of Lindenstrauss and Venkatesh \cite{LV}, the assumption that $R(f)$ has purely cuspidal image may not be as severe a restriction as one might think (see also the proof of Theorem \ref{thm-Weyl-main}). 

\item Though the method of proof is the usual one (take a kernel and compute the integral over $A_{G,H}H(F) \backslash H(\A_F)$ two ways) there are many points in the proof of Theorem \ref{thm-stf} that are not obvious.  
On the spectral side we check that $\mathrm{rtr}\,\pi(f)$ is well-defined for all $f$, not just $K_\infty$-finite $f$.  On the geometric side we define a notion of elliptic elements and the relative analogue of semisimple elements (which we call unimodular and closed).  These have only appeared in special cases in the literature.  We also use Galois cohomology to deal with non-connected stabilizers in a way that we have never seen in the literature in the context of the relative trace formula.
\end{remarks}

The formula in Theorem \ref{thm-stf} is called \textbf{simple} because we have imposed conditions on the test function $f$ to ensure that various analytic difficulties disappear. Theorem \ref{thm-stf} is \textbf{general} because
the geometric set-up includes all trace formulae that the authors have seen as special cases.  
For example, the simple twisted relative trace formula of the second author \cite{Hahn} is a special case of this formula as well as is the usual simple trace formula of Deligne and Kazhdan  \cite{BDKV} (see also \cite{Ro}), as one can see by taking $\chi$ to be trivial and $H$ to be the diagonal copy of $G$ inside $G \times G$.  As another example, let $E/F$ be a quadratic extension, let $G=\mathrm{Res}_{E/F}\GL_n$, let $U_n\leq G$ be a unitary group, let $N \leq G$ the unipotent radical of the Borel subgroup of upper triangular matrices, let $\psi:N(F) \backslash N(\A_F) \lto \CC^\times$ be a character, and set 
$$
H=U_n \times N \quad \textrm{ and } \quad \chi=1 \times \psi.
$$
In this case the trace formula above is a simple version of one introduced by Jacquet and Ye (compare \cite{JY}).  We also note that the formula does not hold for a general connected algebraic subgroup $H \leq G \times G$ without serious modification (see the remark after Proposition \ref{agr}), so in some sense it is as general as possible.  

As an application of these ideas, we prove a relative analogue of the Weyl law in Theorem \ref{thm-Weyl-intro} below.  It gives an asymptotic formula for the number of eigenfunctions of the Laplacian on a locally symmetric space associated to $G$ weighted by the $L^2$-restriction norm over a locally symmetric subspace associated to $H_0\leq G$. 

To state it, assume that $G$ is split and adjoint over $\QQ$. Note that  $G(\QQ) \backslash G(\A_{\QQ})$ is of finite volume but non-compact. Let $H_0 \leq G$ be the direct product of a reductive group and a unipotent group and
$$K:=K_\infty \times K^{\infty}\leq G(\A_\QQ)
$$
where $K_\infty \leq G(\RR)$ is a maximal compact subgroup and $K^\infty \leq G(\A_\QQ^\infty)$ is a compact open subgroup satisfying the torsion-freeness assumption (TF) of \S \ref{sec-Weyl} below. 

In the setting above, using a technique developed by Lindenstrauss and Venkatesh \cite{LV}, we prove Theorem \ref{thm-Weyl-intro} below.  We remark that since $G(\QQ)\backslash G(\A_\QQ)$ is non-compact, even if $H_0(\QQ)\backslash H_0(\A_\QQ)$ is compact the theorem does not
follow in any obvious way from the classical Weyl law or its local variants.

\begin{thm}\label{thm-Weyl-intro}   Assume that $H_0(\QQ) \backslash H_0(\A_{\QQ})$ is compact.
As $X \to \infty$
one has
\begin{align*} 
\sum_{\pi:\,\pi(\Delta) \leq X}\sum_{\varphi \in \mathcal{B}(\pi)^K}\int_{H_0(\QQ) \backslash H_0(\A_{\QQ})}|\varphi(h)|^2dh \sim \alpha(G)\mathrm{meas}_{dh}(H_0(\QQ) \backslash H_0(\A_{\QQ}))X^{d/2},
\end{align*}
where the sum is over isomorphism classes of cuspidal automorphic representations  $\pi$ of $G(\A_\QQ)$, $\mathcal{B}(\pi)$ is an orthonormal basis of $\pi$-isotypic subspace of $L^2_{\mathrm{cusp}}(G(\QQ) \backslash G(\A_{\QQ}))$, $\pi(\Delta)$ is the eigenvalue of the Casimir operator $\Delta$ acting on the space of $K_\infty$-fixed vectors in $\pi$,  $\alpha(G)>0$ is a constant related to the Plancherel measure defined in \cite{LV}, and $d=\dim (G(\RR)/K_{\infty})$.  
\end{thm}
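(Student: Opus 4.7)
The plan is to apply Theorem \ref{thm-stf} to the diagonal embedding $H := \{(h,h) : h \in H_0\} \hookrightarrow G \times G$ with trivial character $\chi = 1$. Since $H_0$ is a product of a reductive and a unipotent group, so is $H$, so Theorem \ref{thm-stf} applies; since $H_0(\QQ) \backslash H_0(\A_{\QQ})$ is compact and $G$ is adjoint (so $A_G$ is trivial), the period \eqref{period} converges for every cuspidal $\phi$ of the form $\varphi \otimes \bar\varphi$. With these choices the spectral side of Theorem \ref{thm-stf} becomes
$$\sum_\pi \rtr\,\pi(f) \;=\; \sum_\pi \sum_{\varphi \in \mathcal{B}(\pi)} \int_{H_0(\QQ)\backslash H_0(\A_{\QQ})} (\pi(f)\varphi)(h)\,\overline{\varphi(h)}\,dh,$$
so once $f$ is chosen so that $\pi(f)$ acts on $\pi^K$ as an approximate projector onto the Casimir range $\pi(\Delta) \leq X$ and kills the orthogonal complement of $\pi^K$, the sum on the left-hand side of the theorem is recovered.

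Following Lindenstrauss--Venkatesh \cite{LV}, I would construct a family of test functions $f = f_X \in C_c^\infty(G(\A_\QQ))$ of the form $f_X = h_X * h_X^*$, bi-invariant under $K$ and positive in the sense that $\pi(f_X) \geq 0$. At the archimedean place $h_{X,\infty}$ is a compactly supported truncation of a heat-kernel-type function whose spherical transform concentrates on the region $\pi(\Delta) \leq X$; at finite places $h_X^\infty$ is a fixed bi-$K^\infty$-invariant function of small support in a neighborhood of the identity, chosen as in \cite{LV} so that $R(f_X)$ has purely cuspidal image and so that the hypotheses of Theorem \ref{thm-stf} (elliptic, unimodular, closed) hold on the $H(\A_F)$-orbits meeting the support of $f_X$. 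Positivity of $\pi(f_X)$ provides the monotonicity needed to sandwich the sharp spectral projector between upper and lower approximations and conclude the asymptotic.

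On the geometric side I would separate the identity class from the remaining relevant classes. The identity contributes
$$\mathrm{meas}_{dh}(H_0(\QQ)\backslash H_0(\A_{\QQ}))\,f_X(e),$$
and by the Plancherel formula on $G(\RR)$ together with the local Weyl law for the Plancherel density of spherical tempered representations (this is exactly what defines the constant $\alpha(G)$ of \cite{LV}), one gets $f_X(e) \sim \alpha(G)\,X^{d/2}$, which matches the proposed main term. The non-identity relevant classes intersecting the support of $f_X$ form a finite set whose cardinality is bounded independently of $X$, and the archimedean relative orbital integrals of the heat-kernel truncation decay on the complement of any neighborhood of $e$; these two facts together yield an $o(X^{d/2})$ bound for the rest of the geometric side.

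The main obstacle I anticipate is engineering $f_X$ to simultaneously (i) satisfy the stringent simple-trace-formula hypotheses of Theorem \ref{thm-stf}, in particular that $R(f_X)$ has cuspidal image and that every $H(\A_F)$-orbit meeting its support be elliptic, unimodular, and closed, while (ii) retaining good enough spectral concentration for the approximate projector argument, and (iii) admitting uniform-in-$X$ bounds on the non-identity archimedean orbital integrals. The Lindenstrauss--Venkatesh construction handles (i) and (ii) together, and (iii) reduces to off-diagonal Gaussian bounds for the heat kernel on $G(\RR)/K_\infty$; once all three are in place, a sandwich argument between the approximate and the sharp spectral projectors, combined with the main term from the identity orbit, delivers the stated asymptotic.
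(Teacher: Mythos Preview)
Your overall strategy—adapt the Lindenstrauss--Venkatesh argument to the diagonal copy of $H_0$ inside $G\times G$—is exactly what the paper does, but the paper organizes it differently and does \emph{not} route the argument through Theorem~\ref{thm-stf}.  The paper splits the asymptotic into two halves: an upper bound (Proposition~\ref{prop-upperbound}) obtained by mimicking \cite[\S5]{LV} with the positive test function $f_X=h_X*h_X^*$ and \emph{including} the Eisenstein contribution to the kernel, which is absolutely convergent because $H_0(\QQ)\backslash H_0(\A_\QQ)$ is compact and is nonnegative by positivity of $f_X$; and a conditional statement (Proposition~\ref{upper}) that the upper bound alone already forces the full asymptotic, again by following \cite{LV} verbatim with $H_0$ in place of $G$.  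Neither step invokes the simple relative trace formula, and in particular no hypothesis on $H$-orbits meeting the support of $f_X$ is ever checked.

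The genuine gap in your proposal is the description of the nonarchimedean factor of $f_X$.  A bi-$K^\infty$-invariant function ``of small support in a neighborhood of the identity'' is essentially $\mathrm{meas}(K^\infty)^{-1}\one_{K^\infty}$, which does \emph{not} make $R(f_X)$ cuspidal; the whole point of \cite{LV} is that the finite part is a carefully chosen combination of Hecke operators (hence supported on a union of many $K^\infty$-double cosets, not near $1$) designed to annihilate the non-cuspidal spectrum.  Once you use the actual \cite{LV} test functions, their large nonarchimedean support means the $H(\A_\QQ)$-orbits meeting $\mathrm{supp}(f_X)$ are no longer controlled by ``small support,'' and there is no reason they are all elliptic, unimodular, and closed in the sense required by Theorem~\ref{thm-stf}; likewise the claim that the set of contributing classes is bounded independently of $X$ is unjustified.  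The paper avoids this entirely by working, as \cite{LV} does, with the pre-trace kernel directly rather than with the simplified formula, and by treating the Eisenstein terms head-on via the compactness of $H_0(\QQ)\backslash H_0(\A_\QQ)$.
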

We refer to the asymptotic in Theorem \ref{thm-Weyl-intro} as a relative Weyl law.  
We can in fact weaken the assumption that $H_0(\QQ) \backslash H_0(\A_\QQ)$ is compact.  Specifically, in Proposition \ref{upper} we prove that if $H_0(\QQ)\backslash H_0(\A_\QQ)$ is of finite volume but non-compact, then the relative Weyl law holds provided that one assumes the upper bound of the relative Weyl law (in the setting of the usual Weyl law this was proven in \cite{Donnelly}).  Interestingly, this is not known in the relative case.

We now outline the sections of this paper.  In the following section we recall the notion of relative classes and relative analogues of definitions often used in the context of the absolute trace formula.  
The proof of Theorem \ref{thm-stf} comes down to evaluating an integral of a kernel function in two ways.  The spectral evaluation is given in \S \ref{sec-rel-tr} and the geometric evaluation is given in \S \ref{sec-geo-side}. Finally, in \S \ref{sec-Weyl} we prove Theorem \ref{thm-Weyl-intro}.

\section*{Acknowledgements}

The authors thank M.~Stern answering questions on the Weyl law in the context of differential geometry.  We also thank the referee for remarks that improved the exposition.

\section{Preliminaries and notation}

\subsection{Relative classes} \label{ssec-rel-classes}

Let $G$ be a connected reductive algebraic group over a characteristic zero field $F$ with algebraic closure $\bar{F}$ and let 
$$
H \leq G \times G
$$ 
be a connected algebraic subgroup that is the direct product of a reductive and a unipotent group.  We let
$$
\mathrm{diag}:G \lto G \times G
$$
denote the diagonal embedding. The letter $R$ will denote an $F$-algebra.
There is an action of $H$ on $G$ given at the level of points by 
\begin{align} \label{action}
\cdot:H(R) \times G(R) &\lto G(R)\\
((h_\ell,h_r),g) &\longmapsto h_\ell gh_r^{-1}. \nonumber
\end{align}
The stabilizer of a $\gamma \in G(R)$ will be denoted by $H_\gamma$.  
By assumption, we can write
$$
H=H^r \times H^u
$$
where $H^r$ is reductive and $H^u$ is unipotent.

\begin{defn} Let $k/F$ be a field.  An element $\gamma \in G(k)$ is 
\begin{itemize}
\item \textbf{closed} if the orbits of $\gamma$ under $H$ and $H^r$ are both closed.
\item \textbf{unimodular} if $H_{\gamma}$ is the direct product of a reductive and a unipotent group.
\item \textbf{elliptic} if the maximal reductive quotient of $H_{\gamma}/\mathrm{diag}(Z_G) \cap H$ has anisotropic center.
\end{itemize}
\end{defn}

\begin{rem} If $H$ is reductive, then a closed element has reductive stabilizer and hence is unimodular.
\end{rem}

If $R$ is an $F$-algebra, then an element of 
\begin{align}
\Gamma(R):=H(R) \backslash G(R)
\end{align}
is called a \textbf{relative class}, or simply a class.  
Note that here the quotient is taken with respect to the action \eqref{action}.  All of the conditions mentioned in the previous definition depend only on the relative class of an element of $\Gamma(R)$, and not on the particular element. 
If $k$ is a field with algebraic closure $\bar{k}$ we say that $\gamma,\gamma' \in G(k)$ are in the same \textbf{geometric class} if there is an $h \in H(\bar{k})$ such that $h \cdot \gamma=\gamma'$.  We denote the set of geometric classes by
\begin{align} \label{geo}
\Gamma^{\mathrm{geo}}(k):=\mathrm{Im}(G(k) \to H \backslash G(k)).
\end{align}

\subsection{The $A$ groups} \label{ssec-A}

If $H$ is a connected algebraic group over a number field $F$ we let $A_H$ be the neutral component (in the real topology) of the real points of the maximal $\QQ$-split torus in $\mathrm{Res}_{F/\QQ}H$.  We let
\begin{align}
A_{G,H}:=A_H \cap (A_G \times A_G)\\
A:=A_H \cap \mathrm{diag}(A_G). \nonumber
\end{align}
We choose Haar measures $da_G$ on $A_G$, $d(a_\ell,a_r)$ on $A_{G,H}$ and $da$ on $A$.

\subsection{Haar measures} \label{ssec-Haar}

Throughout this work we fix a Haar measure $dg$ on $G(\A_F)$ and use it and $da$ to obtain a Haar measure, also denoted by $dg$, on $A_G \backslash G(\A_F)$.  We also fix a Haar measure $d(h_\ell,h_r)$ on $H(\A_F)$ and also denote by $d(h_\ell, h_r)$ the induced measure on $A_{G,H} \backslash H(\A_F)$.
For each unimodular $\gamma \in H(F)$ we let $d(h_\ell,h_r)_\gamma$ be a Haar measure on $H_{\gamma}(\A_F)$
 and let 
$$
\dot{d}(h_{\ell},h_r)
$$
denote the induced right-invariant Radon measure on $H_\gamma(\A_F)\backslash H(\A_F)$.

\section{Relative traces} \label{sec-rel-tr}

As in the introduction, let 
$$
\chi:H(\A_F) \lto \CC^\times
$$
be a quasi-character trivial on $A_{G,H}H(F)$.
Let $f \in C_c^\infty( A_G\backslash G(\A_F))$, and let $\pi$ be a cuspidal automorphic representation of $A_G \backslash G(\A_F)$. 
We let $\mathcal{B}(\pi)$ be an orthonormal basis of the $\pi$-isotypic subspace of 
$L^2_{\mathrm{cusp}}(A_GG(F) \backslash G(\A_F))$ consisting of smooth vectors and let
\begin{align}
K_{\pi(f)}(x,y):=\sum_{\varphi \in \mathcal{B}(\pi)}R(f)\varphi(x)\overline{\varphi}(y).
\end{align}
A priori this expression only converges in $L^2(A_GG(F) \backslash G(\A_F) \times A_GG(F) \backslash G(\A_F))$.  However, it follows from the Dixmier-Malliavin lemma \cite{DM} that there is a unique smooth (jointly in $(x,y)$) square-integrable function that represents $K_{\pi(f)}$ (compare the proof of Theorem \ref{thm-spectral-side}).  From now on we use the notation $K_{\pi(f)}$ to refer to this function, and whenever $R(f)$ has cuspidal image we let
\begin{align} \label{cusp-ker}
K_{f}(x,y):&=\sum_{\pi} \sum_{\varphi \in \mathcal{B}(\pi)} R(f)\varphi(x)\bar{\varphi}(y)
\end{align}
where the sum is over isomorphism classes of cuspidal automorphic representations $\pi$ of $A_G \backslash G(\A_F)$.

We refer to the integral 
\begin{align} \label{rtr}
\rtr\,\pi(f):=\rtr_{H,\chi}(\pi(f)):=\mathcal{P}_{\chi}(K_{\pi(f)})
\end{align}
as the \textbf{relative trace} of $\pi(f)$, where $\mathcal{P}_{\chi}$ is the period integral defined in  \eqref{period} above.  We will show in the course of the proof of Theorem \ref{thm-spectral-side} that the integral in the definition of $\mathcal{P}_{\chi}(K_{\pi(f)})$ is well-defined.

The following theorem amounts to the computation of the spectral side of our relative trace formula:

\begin{thm} \label{thm-spectral-side}
Let $f\in C_c^\infty(A_G\backslash G(\A_F))$, and assume that $R(f)$ has cuspidal
image. Then
$$
\int_{A_{G,H}H(F)\backslash H(\A_F)}\chi(h_\ell,h_r)
K_{f}(h_\ell,h_r)d(h_\ell,h_r)=\sum_\pi \rtr\,\pi(f).
$$
Moreover, the integral on the left and the sum on the right are absolutely convergent.
\end{thm}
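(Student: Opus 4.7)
The plan is to prove the identity by establishing that the double sum-integral
$$
\sum_\pi \int_{A_{G,H}H(F)\backslash H(\A_F)} |\chi(h_\ell,h_r) K_{\pi(f)}(h_\ell,h_r)|\,d(h_\ell,h_r)
$$
is absolutely convergent and appealing to Fubini. Since $K_f = \sum_\pi K_{\pi(f)}$ by \eqref{cusp-ker}, this simultaneously yields absolute convergence of the left-hand side integral, of the spectral sum $\sum_\pi|\rtr\,\pi(f)|$, and the desired equality. As a first substantive step, I would use the Dixmier--Malliavin lemma to write $f = \sum_{i=1}^N f_{1,i}*f_{2,i}$ as a finite sum of convolutions with $f_{j,i} \in C_c^\infty(A_G\backslash G(\A_F))$, and rewrite
\begin{align*}
K_{\pi(f)}(x,y) = \sum_{i=1}^N \sum_{\varphi \in \mathcal{B}(\pi)} R(f_{1,i})\varphi(x)\cdot\overline{R(f_{2,i}^*)\varphi(y)},
\end{align*}
with $f^*(g):=\overline{f(g^{-1})}$. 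Each summand is a product of smooth cusp forms, so $K_{\pi(f)}$ admits a jointly smooth representative on $(A_GG(F)\backslash G(\A_F))^2$, as asserted in the setup preceding the theorem.

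The key technical estimate is a pointwise bound
$$
|K_{\pi(f)}(x,y)| \leq C_\pi \|x\|^{-M} \|y\|^{-M}
$$
for every $M\geq 1$, where $\|\cdot\|$ is a height function on $A_GG(F)\backslash G(\A_F)$ and the constants $C_\pi$ can be chosen so that $\sum_\pi C_\pi < \infty$. This follows by convolving $f_{j,i}$ on each side with elements of the universal enveloping algebra and applying Cauchy--Schwarz to the sum over $\mathcal{B}(\pi)$, combined with uniform moderate growth of smooth cusp forms, their rapid decay on Siegel sets, and the known trace-class property of $R(g)$ restricted to $L^2_\mathrm{cusp}(A_GG(F)\backslash G(\A_F))$ for $g \in C_c^\infty(A_G\backslash G(\A_F))$. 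One then verifies that $\int_{A_{G,H}H(F)\backslash H(\A_F)} \|h_\ell\|^{-M}\|h_r\|^{-M}\, d(h_\ell,h_r) < \infty$ for $M$ large: writing $H = H^r \times H^u$, the unipotent factor has compact automorphic quotient $H^u(F)\backslash H^u(\A_F)$, while on the reductive factor reduction theory applied to Siegel sets controls the pullback of the height functions. Since $|\chi|$ is a quasi-character trivial on $A_{G,H}H(F)$, it has at most polynomial growth in $\|\cdot\|$ and is absorbed by the decay.

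The main obstacle is precisely the pointwise bound on $K_{\pi(f)}$ with summable constants $C_\pi$: extracting joint pointwise control in two variables strong enough to integrate against a non-compact $H$-quotient, with constants summable over the cuspidal spectrum, requires combining Dixmier--Malliavin, differentiation tricks, and trace-class input in a careful way. A secondary subtlety is that the embedding $H \hookrightarrow G\times G$ is arbitrary, so one must check that the pullbacks of the two height functions from $G$ yield integrability on $A_{G,H}H(F)\backslash H(\A_F)$; this is where the hypothesis that $H$ is the direct product of a reductive and a unipotent group is genuinely used. Granted these steps, Fubini together with the definition $\rtr\,\pi(f) = \mathcal{P}_\chi(K_{\pi(f)})$ concludes the proof.
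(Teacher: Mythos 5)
Your approach is essentially the paper's: Dixmier--Malliavin factorization of $f$, a pointwise smooth representative for $K_{\pi(f)}$, a Cauchy--Schwarz (and then H\"older over $\pi$) bound reducing to a diagonal kernel, summability over $\pi$ from the trace-class property of $R(h)|_{L^2_{\mathrm{cusp}}}$, and rapid decay from Godement-type estimates. The small cosmetic differences (the paper uses triple convolutions $f=f_1*f_2*f_3$ rather than pairs to justify the smooth representative, and phrases rapid decay in terms of the simple roots $\Delta$ and the cone $A^G_r$ rather than a height function) do not change the substance.

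Where you are underselling the difficulty is the step you call a ``secondary subtlety,'' namely the finiteness of
$$\int_{A_{G,H}H(F)\backslash H(\A_F)} \|h_\ell\|^{-M}\|h_r\|^{-M}\, d(h_\ell,h_r).$$
Reduction theory for $H^r$ alone does not control how the height functions on $G$ restrict to an arbitrary reductive subgroup $H^r \hookrightarrow G\times G$; the Siegel domain for $H^r$ need not sit inside a Siegel domain for $G\times G$ in any useful way, and it is not a formality that the restricted height decays fast enough to dominate the (possibly nontrivial) volume growth of the $H^r$-quotient. In the paper this is exactly the content of Proposition \ref{agr}: after factoring off the compact unipotent quotient, the reductive case is handled by invoking the argument of \cite[Proposition 1]{AGR}. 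You should either cite that result or reproduce its argument; as written, ``reduction theory applied to Siegel sets controls the pullback of the height functions'' is an assertion, not a proof. You correctly flag that the direct-product (rather than semidirect-product) hypothesis on $H$ is used here --- indeed the paper's remark after Proposition \ref{agr} notes the statement is false in general --- but the reductive case itself still requires the AGR input.

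One further minor point: the way you phrase the pointwise bound, $|K_{\pi(f)}(x,y)|\le C_\pi\|x\|^{-M}\|y\|^{-M}$ with $\sum_\pi C_\pi<\infty$, is slightly stronger than what Cauchy--Schwarz immediately gives, which is
$$|K_{\pi(f)}(x,y)| \le \bigl(K_{\pi(f_1^**f_1)}(x,x)\bigr)^{1/2}\bigl(K_{\pi(f_2*f_2^*)}(y,y)\bigr)^{1/2};$$
one then needs H\"older over $\pi$ and a further Dixmier--Malliavin step (as in the paper, reducing to $h=h_1*h_2*h_3$ and citing Godement's estimate and trace-class finiteness of $\sum_\pi \mathrm{tr}\,\pi(h_3^**h_3)$) to get a per-$\pi$ constant that is summable. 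The outcome is the same, but you should spell out that intermediate passage rather than assert it.
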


\noindent This is the main result of this section. A similar result is proven in 
\cite{Hahn} in a special case, but we give a simpler proof here.  

Fix a maximal compact subgroup $K_\infty$ of $G(F_\infty)$, where $F_\infty:=\prod_{v|\infty}F_v$ is the product of the archimedian completions of $F$.
As mentioned above, in the course of the proof of theorem we will prove that the integral in the definition of $\rtr\,\pi(f)$ is absolutely convergent.  Assuming this for the moment, we obtain the following corollary:

\begin{cor} \label{cor-autom-form} Assume that $\varphi \in L^2_{\mathrm{cusp}}(A_GG(F) \backslash G(\A_F))$ is a cuspidal automorphic form, that is, $\varphi$ is cuspidal, $K_\infty$-finite, and finite under the center of the universal enveloping algebra of $\mathrm{Lie}(\mathrm{Res}_{F/\QQ}G(\RR)) \otimes_\RR \CC$.  Then the integral defining $\mathcal{P}_{\chi}(\varphi \times \overline{\varphi})$ is absolutely convergent.  
\end{cor}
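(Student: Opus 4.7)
My aim is to realize $\varphi(h_\ell)\overline{\varphi}(h_r)$ as one summand in the spectral expansion of a kernel $K_{\pi(f)}(h_\ell,h_r)$, and then extract absolute convergence from the termwise spectral control established along the way in the proof of Theorem~\ref{thm-spectral-side}.

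Let $\pi$ denote the cuspidal automorphic representation of $A_G\backslash G(\A_F)$ in whose isotypic subspace $\varphi$ lies. I would first produce an auxiliary $f\in C_c^\infty(A_G\backslash G(\A_F))$ for which $R(f)$ has cuspidal image and $R(f)\varphi=\varphi$. Since $\varphi$ is $K_\infty$-finite, $\mathcal{Z}$-finite, and fixed by some compact open $K^\infty\leq G(\A_F^\infty)$, admissibility of the cuspidal spectrum places $\varphi$ inside a finite-dimensional subspace $V\subset L^2_{\mathrm{cusp}}$ determined by the corresponding $K_\infty$-type, level $K^\infty$, and infinitesimal character. Taking $f=f_\infty\otimes\mathbf{1}_{K^\infty}/\mathrm{vol}(K^\infty)$, with $f_\infty\in C_c^\infty(A_G\backslash G(F_\infty))$ an approximate idempotent (produced via Dixmier--Malliavin) for the relevant $K_\infty$-type and infinitesimal character, one obtains an operator $R(f)$ on $L^2_{\mathrm{cusp}}$ acting non-trivially on $V$; further localization on the cuspidal part (for example via a pseudocoefficient of a discrete series at one archimedean place, or a supercuspidal matrix coefficient at a finite place) arranges that $R(f)$ has cuspidal image. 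After rescaling and a small adjustment one may ensure $R(f)\varphi=\varphi$.

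Next, extend $\varphi/\|\varphi\|$ to an orthonormal basis $\mathcal{B}(\pi)$ of the $\pi$-isotypic subspace of $L^2_{\mathrm{cusp}}$. Because $R(f)\varphi=\varphi$, the spectral expansion
\[
K_{\pi(f)}(h_\ell,h_r)=\sum_{\psi\in\mathcal{B}(\pi)}R(f)\psi(h_\ell)\,\overline{\psi}(h_r)
\]
contains $\|\varphi\|^{-2}\varphi(h_\ell)\overline{\varphi}(h_r)$ as its summand at $\psi=\varphi/\|\varphi\|$.

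Finally, I would invoke the termwise absolute convergence that the proof of Theorem~\ref{thm-spectral-side} establishes for any admissible $f$, namely the finiteness of
\[
\sum_\pi\sum_{\psi\in\mathcal{B}(\pi)}\int_{A_{G,H}H(F)\backslash H(\A_F)}\bigl|\chi(h_\ell,h_r)\,R(f)\psi(h_\ell)\,\overline{\psi}(h_r)\bigr|\,d(h_\ell,h_r).
\]
Extracting the single term with $\psi=\varphi/\|\varphi\|$ then yields the desired absolute convergence of $\mathcal{P}_\chi(\varphi\times\overline{\varphi})$. The principal obstacle is the first step: producing $f\in C_c^\infty$ that simultaneously has cuspidal image and acts as the identity on the prescribed cuspidal automorphic form $\varphi$; once this is available, the rest is a clean extraction argument built on the termwise spectral bound.
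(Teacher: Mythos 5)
The flaw is in your first step: you insist on producing $f$ with $R(f)\varphi=\varphi$ \emph{and} $R(f)$ of cuspidal image, but these two requirements are in general incompatible. Your suggested mechanism for forcing cuspidal image (a pseudocoefficient of a discrete series at an archimedean place, or a supercuspidal matrix coefficient at a finite place) annihilates $\pi$ unless $\pi_\infty$ is that particular discrete series (resp.~$\pi_v$ that particular supercuspidal). A generic cuspidal $\varphi$ — say a Maass form on $\PGL_2$, everywhere unramified — has $\pi_\infty$ a principal series and no supercuspidal local components, so any $f$ built this way satisfies $R(f)\varphi=0$, not $R(f)\varphi=\varphi$. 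The cuspidal-image hypothesis simply cannot be imposed here.

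The good news is that it is also unnecessary, and this is exactly the route the paper takes. Reading the proof of Theorem~\ref{thm-spectral-side} carefully, the rapid-decrease estimate on $\sum_\pi|K_{\pi(f)}(x,y)|$ (display \eqref{is-rd2}) is established for \emph{arbitrary} $f\in C_c^\infty(A_G\backslash G(\A_F))$; the cuspidal-image hypothesis only enters later when one identifies $K_f$ with the geometric kernel $\sum_\gamma f(x^{-1}\gamma y)$, which is irrelevant for the corollary. Hence for any $f$ and any single $\pi$, the function $K_{\pi(f)}$ is rapidly decreasing and Proposition~\ref{agr} makes $\mathcal{P}_\chi(K_{\pi(f)})$ absolutely convergent. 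The paper then chooses $f$ (via the standard approximate-identity/admissibility argument you sketch, but without the pseudocoefficient device) so that $R(f)$ acts as the orthogonal projection onto $\CC\varphi$ inside $L^2_{\mathrm{cusp}}(\pi)$; this immediately yields $K_{\pi(f)}=\|\varphi\|^{-2}\varphi\times\overline\varphi$ as functions, so $\mathcal{P}_\chi(\varphi\times\overline\varphi)=\|\varphi\|^2\,\mathcal{P}_\chi(K_{\pi(f)})$ is absolutely convergent — no term-by-term extraction from an infinite spectral sum is needed. Your extraction step would also require care: the absolute bound in the proof of Theorem~\ref{thm-spectral-side} comes from a Cauchy--Schwarz applied after splitting $f=f_1*f_2$, giving control on $\sum_\psi|\pi(f_1)\psi(x)||\pi(f_2^*)\psi(y)|$, which is a different termwise decomposition than $\sum_\psi|R(f)\psi(x)||\overline\psi(y)|$; identifying the $\varphi$-term of the latter as dominated by the former requires additional choices. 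Dropping the cuspidal-image requirement and aiming directly for a rank-one projector within $L^2_{\mathrm{cusp}}(\pi)$ avoids both difficulties.
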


\begin{proof} It suffices to verify the corollary when $\varphi$ lies in the $\pi$-isotypic subspace $L^2_{\mathrm{cusp}}(\pi)$ 
of the cuspidal subspace of
$L^2(A_G G(F)\backslash G(\A_F))$ for a cuspidal automorphic representation $\pi$.
By a standard argument one can choose an $f \in C_c^\infty(A_G\backslash G(\A_F))$ such that $R(f)\varphi=\varphi$
and $R(f)$ acts by zero on the orthogonal complement of $\varphi$ in $L^2_{\mathrm{cusp}}(\pi)$.  Hence
$$
\mathcal{P}_{\chi}(\varphi \times \overline{\varphi})= \mathcal{P}_{\chi}\left(K_{\pi(f)} \right)=\rtr\,\pi(f).
$$
\end{proof}

\subsection{Integrals of rapidly decreasing functions}

Let $Z \leq \mathrm{Res}_{F/\QQ}G$ be the maximal split torus in the center of $G$.  Let $T \leq \mathrm{Res}_{F/\QQ}G \times \mathrm{Res}_{F/\QQ}G$ be a maximal split torus and let $\Delta$ be a choice of simple roots of $T/(Z \times Z)$ in $\mathrm{Res}_{F/\QQ}G \times \mathrm{Res}_{F/\QQ}G$.
Set
$$
A^G:=T(\RR)^{+}/A_G \times A_G
$$
where the $+$ denotes the neutral component in the real topology.  For any positive real number $r$ we set
\begin{align}
A_r^G:&=\{t \in A^G:t^{\alpha}>r \textrm{ for all } \alpha \in \Delta\}.
\end{align}
For concreteness, we record the following definition:
\begin{defn} \label{defn-rd} A function
$$
\phi: A_GG(F) \backslash G(\A_F)  \times A_GG(F)\backslash G(\A_F) \lto \CC
$$
is \textbf{rapidly decreasing} if it is smooth and for all compact subsets $\Omega \subset (A_GG(F) \backslash G(\A_F))^{\times 2}$, $r \in \RR_{>0}$, and $p \in \ZZ$ there is a constant $C=C_{\Omega,r,p}$ such that one has
$$
|\phi(tx)|\leq C t^{\alpha p}
$$
for all $t \in A_r^G$, $x \in \Omega$, and $\alpha \in \Delta$.
\end{defn}

\noindent To ease confusion, above and below we use the symbol $\phi$ for a function on $(A_G G(F)\backslash G(\A_F))^{ \times 2}$ and $\varphi$ for a function on $A_G G(F) \backslash G(\A_F)$.

\begin{prop}\label{agr}
For all rapidly decreasing (smooth) functions $\phi\in
L^2((A_GG(F) \backslash G(\A_F))^{ \times 2})$, the period integral
$$
\mathcal{P}_\chi(\phi):=\int_{A_{G,H}H(F)\backslash
H(\A_F)}\chi(h_\ell,h_r)\phi(h_\ell,h_r)d(h_\ell,h_r)
$$
is absolutely convergent.  
\end{prop}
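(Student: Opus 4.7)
The plan is to reduce the period integral, via reduction theory for $H$, to an integral over a real vector space where the rapid decay of $\phi$ overwhelms the at-most-polynomial growth of $|\chi|$. Since $H = H^r \times H^u$ with $H^r$ reductive and $H^u$ unipotent, the quotient $A_H H(F) \backslash H(\A_F)$ has finite invariant volume by classical reduction theory; I would fix a Siegel-set fundamental domain $\mathcal{F}_0$ for it, together with a closed vector-space complement $\Lambda$ to $A_{G,H}$ in $A_H$ (such exists because $A_{G,H}$ is a connected closed subgroup of the connected real abelian group $A_H$), so that $A_{G,H} \backslash A_H \cong \Lambda$ as real vector spaces.

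Using these and Fubini one has
$$
\int_{A_{G,H}H(F)\backslash H(\A_F)} |\chi(h)\phi(h)|\, dh \;=\; \int_\Lambda |\chi(\lambda)| \int_{\mathcal{F}_0} |\chi(h)\phi(\lambda h)|\, dh\, d\lambda.
$$
Since $|\chi|$ is a positive character of $H(\A_F)$ trivial on $A_{G,H}H(F)$, its restriction to a Siegel representative set inside $\mathcal{F}_0$ contributes at most a polynomial in height (via the Iwasawa decomposition of $H^r$), while on $\Lambda$ it is itself a continuous character, hence of at most polynomial growth in the $A^G$-heights of the image of $\lambda$. The problem reduces to showing that this inner finite-volume integral, paired with $|\chi(\lambda)|$, is integrable as $\lambda$ varies in $\Lambda$.

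The crucial observation, forced by $A_H \cap (A_G \times A_G) = A_{G,H}$, is that $\Lambda$ embeds as a closed real vector subspace of $A^G = T(\RR)^+/(A_G \times A_G)$. Applying reduction theory for $G \times G$, every element $\lambda h$ has a Siegel representative with $A^G$-component $a_{\lambda, h} \in A^G_{r_0}$, and one should have $\max_{\alpha \in \Delta} a_{\lambda, h}^\alpha \gg \|\lambda\|^c$ for some $c > 0$, uniformly in $h \in \mathcal{F}_0$. Rapid decrease then provides $|\phi(\lambda h)| \leq C_p (\max_\alpha a_{\lambda, h}^\alpha)^{-p}$ for any $p \geq 0$; taking $p$ large enough beats the polynomial growth of $|\chi(\lambda)|$ and yields an $L^1$ dominating function on $\Lambda$. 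Fubini then gives the absolute convergence.

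The main obstacle, as I see it, is the uniform polynomial lower bound $\max_\alpha a_{\lambda, h}^\alpha \gg \|\lambda\|^c$. Establishing this calls for a cone decomposition of $\Lambda$ with respect to the Weyl chamber structure of $A^G$: on each cone one must show that the Siegel parameter of $\lambda h$ grows polynomially with $\|\lambda\|$, and that translation by $h \in \mathcal{F}_0$ perturbs this parameter only multiplicatively. Directions of $\Lambda$ lying along walls of Weyl chambers (where the Siegel parameter can degenerate) require extra care, presumably through a Levi reduction. All of this lives within classical Borel--Harish-Chandra reduction theory, but making it explicit for an arbitrary closed subspace $\Lambda \hookrightarrow A^G$ (which need not be aligned with the Weyl chambers of $G \times G$) is where the substantive work lies.
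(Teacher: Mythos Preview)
Your outline is essentially an attempt to reprove the result that the paper simply quotes: the paper's entire argument is to observe that the unipotent factor $H^u$ contributes a compact quotient $H^u(F)\backslash H^u(\A_F)$, reducing to the case $H=H^r$ reductive, and then to invoke the argument of \cite[Proposition~1]{AGR} (Ash--Ginzburg--Rallis). What you sketch---comparing a Siegel domain for $H$ with Siegel domains for $G\times G$ and showing the ambient Siegel height grows polynomially along the non-compact directions of $H$---is precisely the content of that AGR proposition. So this is not a different route; it is a sketch of the cited proof, with the hard step honestly flagged as unresolved.

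One structural point deserves comment. Your decomposition $A_{G,H}H(F)\backslash H(\A_F)\cong \Lambda\times\mathcal{F}_0$ isolates only the \emph{central} non-compact directions of $H$ (those coming from $A_H/A_{G,H}$). But $\mathcal{F}_0$, being a Siegel domain for $A_H H(F)\backslash H(\A_F)$, is itself non-compact whenever the semisimple part of $H^r$ has positive $F$-rank: it has its own cusp directions coming from the maximal split torus of $H^r$ modulo center. Your claimed estimate $\max_\alpha a_{\lambda,h}^\alpha\gg\|\lambda\|^c$ \emph{uniformly in $h\in\mathcal{F}_0$} therefore hides a second instance of the very obstacle you identify for $\Lambda$, now for the Levi directions inside $\mathcal{F}_0$. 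The rapid-decrease bound in Definition~\ref{defn-rd} is only uniform over compact $\Omega$, so you cannot simply absorb $\mathcal{F}_0$ into the constant. The AGR argument avoids this by treating the full Siegel domain of $H$ (central and semisimple directions together) and proving a single comparison with the Siegel geometry of the ambient group; separating $\Lambda$ from $\mathcal{F}_0$ gains nothing and forces you to run the comparison twice.
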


\begin{proof} Since $H$ is the direct product of a unipotent group and a reductive group, and $U(F) \backslash U(\A_F)$ is compact for any unipotent group $U$, it suffices to prove the proposition in the special case where $H$ is reductive.  In this case, the argument proving \cite[Proposition 1]{AGR} implies the proposition.
\end{proof}

\begin{rem} This proposition depends crucially on the fact that $H$ is assumed to be a direct, not a semidirect, product of a reductive group and a unipotent group.  It is false for a general connected algebraic group.  Examples of this occur already in low-rank applications of the Rankin-Selberg method (see \cite[Lemma 10.3]{GG} for an example).
\end{rem}

We also recall the following basic theorem \cite[(15)']{godement_cusp}:
\begin{thm}[\cite{godement_cusp}]\label{thm-godement} 
Let $r \in \RR_{>0}$, $p \in \ZZ$ and let $\Omega \subset (A_GG(F) \backslash G(\A_F))^{ \times 2}$ be compact.  If $\Phi \in C_c^\infty((A_G \backslash G(\A_F))^{ \times 2})$ then one has an estimate
$$
|R(\Phi)\phi(tx)|\leq C  t^{\alpha p}\parallel\phi\parallel
$$
for all $\phi \in L^2_{\mathrm{cusp}}((A_GG(F) \backslash G(\A_F))^{\times 2})$,  $t \in A_r^G$, $\alpha \in \Delta$, and $x \in \Omega$, where the constant $C:=C_{r,p,\Omega,\Phi}$ is independent of $\phi$.  In particular, $R(\Phi)\phi$ is rapidly decreasing.  
\qed
\end{thm}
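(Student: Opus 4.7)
\medskip\noindent
\textbf{Proof proposal.}
The plan is to prove this via Godement's classical argument: exploit cuspidality (in a single variable) to express $R(\Phi)\phi$ evaluated deep in the cusp as an integral of a ``difference quotient'' of $\Phi$, whose smallness comes from the contraction of the unipotent radical of a maximal parabolic under $\mathrm{Ad}((tx)^{-1})$.

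\emph{Setup.} Fix $\alpha \in \Delta$. Then $\alpha$ is a simple root of a maximal split torus in either the $\ell$- or the $r$-factor of $T/(Z \times Z)$; without loss of generality, assume the $\ell$-factor. Let $P_\alpha = M_\alpha N_\alpha$ be the maximal $\QQ$-parabolic of $\mathrm{Res}_{F/\QQ}G$ whose Levi $M_\alpha$ omits $\alpha$, so every root of the split torus appearing in $\mathrm{Lie}(N_\alpha)$ has strictly positive $\alpha$-coefficient. Embed $N_\alpha \hookrightarrow G \times G$ via $n \mapsto (n, e)$. For $t = (t_\ell, t_r) \in A^G$ in the positive chamber, conjugation by $(tx)^{-1}$ on $(N_\alpha, e)$ reduces to $\mathrm{Ad}((t_\ell x_\ell)^{-1})$ on $N_\alpha$, whose every eigenvalue is bounded by $t^{-\alpha}$.

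\emph{Core step.} Choose $\eta \in C_c^\infty(N_\alpha(\A_F))$ with $\sum_{\gamma \in N_\alpha(F)} \eta(\gamma n) \equiv 1$ on $N_\alpha(F) \backslash N_\alpha(\A_F)$, and set $V := \int_{N_\alpha(\A_F)} \eta$. Since $R(\Phi)\phi$ is still cuspidal in the $\ell$-variable (cuspidality is preserved by right convolution) one has
\begin{align*}
\int_{N_\alpha(\A_F)} \eta(n)\, R(\Phi)\phi\bigl((n,e)\cdot tx\bigr)\, dn = 0.
\end{align*}
Subtracting this from the identity $V \cdot R(\Phi)\phi(tx) = \int \eta(n) R(\Phi)\phi(tx)\,dn$ and substituting $g \mapsto (tx)^{-1}(n^{-1}, e)(tx)\, g$ in the convolution defining $R(\Phi)\phi((n,e)tx)$ gives
\begin{align*}
V \cdot R(\Phi)\phi(tx) = \int_{N_\alpha(\A_F)} \eta(n) \int \bigl[\Phi(g) - \Phi(m_{n,t,x}\, g)\bigr] \phi(txg)\, dg\, dn,
\end{align*}
where $m_{n,t,x} := (tx)^{-1}(n^{-1}, e)(tx)$. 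For $n \in \mathrm{supp}(\eta)$ and $t$ in the chamber, $m_{n,t,x}$ lies within distance $O(t^{-\alpha})$ of the identity, so smoothness of $\Phi$ yields the pointwise bound $|\Phi(g) - \Phi(m_{n,t,x}g)| \lesssim t^{-\alpha}$, supported on a fixed compact set. Combining with the uniform local bound $\int_K |\phi(txg)|^2\, dg \lesssim \|\phi\|_2^2$ (valid for $tx$ in a Siegel set and $K$ bounded, by standard reduction theory: the image of $txK$ in the quotient covers a fundamental domain a uniformly bounded number of times) and Cauchy-Schwarz, this delivers
\begin{align*}
|R(\Phi)\phi(tx)| \leq C_1(\alpha)\, t^{-\alpha} \|\phi\|_2.
\end{align*}

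\emph{Iteration and main obstacle.} To obtain $t^{\alpha p}$ decay for arbitrary $p \in \ZZ$, iterate: the integrand $\eta(n)[\Phi(g) - \Phi(m_{n,t,x}g)]$ (integrated over $n$) is itself a smooth compactly supported test function in $g$, so the cuspidality trick applies again to gain one more factor of $t^{-\alpha}$. After $|p|$ iterations one obtains $|R(\Phi)\phi(tx)| \leq C_{|p|}(\alpha)\, t^{p\alpha} \|\phi\|_2$ for $p \leq 0$; the case $p \geq 0$ is trivial from the $p = 0$ estimate. The main obstacle is uniform bookkeeping across the iteration --- ensuring that the constants at each step, and the local $L^2$-mass bound, remain uniform in $\phi$ and in $tx$ as $t$ ranges over the chamber --- but this is standard from Siegel-set reduction theory.
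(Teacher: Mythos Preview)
The paper does not prove this statement; it is quoted directly from Godement \cite{godement_cusp} (equation (15)$'$ there) and closed with a \qed, so there is no in-paper argument to compare against. Your sketch is precisely Godement's classical proof: use cuspidality of $R(\Phi)\phi$ along the unipotent radical $N_\alpha$ of a maximal parabolic to subtract a vanishing integral, exploit the contraction $\mathrm{Ad}((t_\ell x_\ell)^{-1})|_{N_\alpha}=O_{r,\Omega}(t^{-\alpha})$ to gain one power of $t^{-\alpha}$ via the Lipschitz bound on $\Phi$, control the local $L^2$-mass by reduction theory, and iterate.

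The only point deserving comment is the iteration: the replacement test function $\Psi_{t,x}(g)=V^{-1}\int\eta(n)[\Phi(g)-\Phi(m_{n,t,x}g)]\,dn$ depends on $(t,x)$, so one must check that its $C^k$-seminorms inherit the factor $t^{-\alpha}$ (this follows since left-invariant differentiation commutes with the left translation $g\mapsto m_{n,t,x}g$, reducing to the $C^{k+1}$-Lipschitz bound on $\Phi$) and that its support stays in a fixed compact set independent of $t,x$ (this holds because the conjugates $m_{n,t,x}$ remain bounded for $t\in A_r^G$, $n\in\mathrm{supp}\,\eta$, $x\in\Omega$). With that bookkeeping your outline is correct.
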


\subsection{Proof of Theorem \ref{thm-spectral-side}}

 By assumption, $R(f)$ has image in the cuspidal spectrum.  Thus the operator $R(f)$ is trace class and hence is Hilbert-Schmidt.
 We therefore have the convergent
$L^2$-expansion
\begin{align} \label{ker-above}
K_{f}(x,y)&= \sum_{\pi} K_{\pi(f)}(x,y)=
\sum_{\pi} \sum_{\varphi \in \mathcal{B}(\pi)} R(f)\varphi(x)\bar{\varphi}(y)
\end{align}
where the sum is over isomorphism classes of cuspidal automorphic representations of $A_G \backslash G(\A_F)$.  
By the Dixmier-Malliavin lemma \cite{DM} we can write $f$ as a finite sum of functions of the form 
$$
f_1*f_2*f_3 
$$
for $f_1,f_2,f_3 \in C_c^\infty(A_G \backslash G(\A_F))$.  It clearly suffices to prove the theorem for $f$ of this special form, so for the moment we assume that $f=f_1*f_2*f_3$.  For $f \in C_c^\infty(A_G \backslash G(\A_F))$ let
$$
(f)^{\vee}(g):=f(g^{-1}).
$$
We note that
\begin{align*}
\sum_{\varphi \in \mathcal{B}(\pi)} R(f)\varphi(x)\bar{\varphi}(y)
&=\sum_{\varphi \in \mathcal{B}(\pi)} \varphi(x)R((f)^\vee)\bar{\varphi}(y)
\end{align*}
because they both represent the same kernel.
Thus 
\begin{align} \label{is-rap-dec}
K_{\pi(f)}(x,y)&=\sum_{\varphi \in \mathcal{B}(\pi)} R(f_1*f_2*f_3)\varphi(x)\bar{\varphi}(y)=\sum_{\varphi \in \mathcal{B}(\pi)} R(f_2*f_3)\varphi(x)R(f_1^{\vee})\bar{\varphi}(y) \\
&=  (R(f_2) \times R(f_1^{\vee}))\sum_{\varphi \in \mathcal{B}(\pi)} R(f_3)\varphi(x)\bar{\varphi}(y). \nonumber
\end{align}
The latter function is smooth as a function of $(x,y)$ (jointly) and this is the unique smooth function representing  $K_{\pi(f)}(x,y)$ as mentioned earlier (to prove convergence one can invoke Theorem \ref{thm-godement}).  Thus we can view $K_{\pi(f)}(x,y)$ as an honest function.    The same is true of $K_{f}(x,y)$ and \eqref{ker-above} holds pointwise.  

Thus in view of Proposition \ref{agr}, to complete the proof of the theorem it suffices to show that for any $f \in C_c^\infty(A_G\backslash G(\A_F))$ one has that
\begin{align} \label{is-rd2}
\sum_{\pi} |K_{\pi(f)}(x,y)|
\end{align}
is rapidly decreasing as a function of $(x,y) \in (A_G G(F)\backslash G(\A_F))^{\times 2}$.  
To see this, we use a trick going back to Selberg.  
Using the Dixmier-Malliavin lemma we reduce to the case where $f=f_1*f_2$.  For $f \in C_c^\infty(A_G \backslash G(\A_F))$ we set $f^*(g):=\bar{f(g^{-1})}$.  
Applying the Cauchy-Schwarz inequality
 we obtain
\begin{align*}
|K_{\pi(f)}(x,y)|^2 &=\left|\sum_{\varphi \in \mathcal{B}(\pi)}
\pi(f_1)\varphi(x)
\overline{\pi(f_2^*)\varphi}(y) \right|^2 \\
& \leq \sum_{\varphi \in \mathcal{B}(\pi)}\left|\pi(f_1)\varphi(x)\right|^2\sum_{\varphi \in \mathcal{B}(\pi)}\left|\bar{\pi(f_2^*)\varphi}(y)\right|^2\\
&=K_{\pi(f_1^**f_1)}(x,x) K_{\pi(f_2*f_2^*)}(y,y).
\end{align*}
We note that originally the first identity is an identity of $L^2$-functions, but using the Dixmier-Malliavin lemma and Theorem \ref{thm-godement} as above we can regard it as a pointwise identity of continuous functions.  The same is true of the rest of the functions appearing in the inequalities above, and in particular the application of Cauchy-Schwarz makes sense.  The point of all of this is that the kernels
$K_{\pi(f_1*f_1^*)}(x,x)$, $ K_{\pi(f_2^**f_2)}(y,y)$ are positive as functions of $x$ and $y$.  

By H\"older's inequality one has
$$
\sum_{\pi}(K_{\pi(f_1^**f_1)}(x,x)K_{\pi(f_2*f_2^*)}(y,y))^{1/2} \leq \left(\sum_{\pi}K_{\pi(f_1^**f_1)}(x,x)\right)^{1/2} \left(\sum_{\pi} K_{\pi(f_2*f_2^*)}(y,y) \right)^{1/2}.
$$
Thus it is enough to prove that for all $h \in C_c^\infty(A_G \backslash G(\A_F))$ the sum
\begin{align} \label{is-rd3}
\sum_{\pi}K_{\pi(h)}(x,x)
\end{align}
is rapidly decreasing as a function of $x$.  Using the Dixmier-Malliavin lemma again we reduce to the case that $h=h_1*h_2*h_3$, and 
arguing as in the beginning of the proof we obtain 
\begin{align}
\sum_\pi K_{\pi(h)}(x,y)=R(h_2) \times R(h_1^{\vee}) \sum_{\pi}K_{\pi(h_3)}(x,y).
\end{align}
In the notation of Definition \ref{defn-rd}, Theorem \ref{thm-godement} implies that for all compact subsets 
$\Omega \subset (A_GG(F) \backslash G(\A_F))^{\times 2}$, $x \in \Omega$, $r \in \RR_{>0}$, and $p \in \ZZ$ one has
$$
\left|\sum_{\pi} K_{\pi(h)}(tx,tx) \right|
 \ll_{h_1,h_2,\Omega,r,p} t^{\alpha p} \left(\sum_{\pi}\mathrm{tr}\,\pi(h_3^**h_3)\right)^{1/2}
$$
for all $t \in A^{G}_r$ and $\alpha \in \Delta$. 
Note that  $\sum_\pi \mathrm{tr}\,\pi(h_3^**h_3)<\infty$  since the restriction of the operator $R(h_3)$ to the cuspidal spectrum is of trace class (and hence Hilbert-Schmidt).  This implies the desired rapid decrease of \eqref{is-rd3} and hence the theorem.
\qed

\section{The geometric side} \label{sec-geo-side}

\subsection{Relative Orbital Integrals} \label{ssec-roi}

Let $H$ and $G$ be connected algebraic $F$-groups with $H \leq G \times G$, where $G$ is reductive, and $H$ is the direct product of a reductive and a unipotent group. Let $\chi:H(\A_F) \to \CC^\times$ be a quasi-character trivial on $A_{G,H}H(F)$.  

\begin{defn}
An element $\gamma_v \in G(F_v)$ is \textbf{relevant} if $\chi_v$ is trivial on $H_{\gamma_v}(F_v)$.  An element $\gamma \in G(F)$ is \textbf{relevant} if $\gamma_v$ is relevant for all $v$.
\end{defn}

The point of this definition is that irrelevant elements will not end up contributing to the trace formula.  We note that if $\chi$ is trivial then all elements are relevant.

\begin{defn} Let $v$ be a place of $F$.
For $f_v\in C_c^\infty(G(F_v))$ and $\gamma_v\in G(F_v)$ relevant, unimodular and closed we define the \textbf{local relative orbital integral}:
$$
\textrm{RO}_{\gamma_v}^{\chi_v}(f_v) = \int_{H_{\gamma_v}(F_v)\backslash H(F_v)} \chi_v(h_{\ell},h_r) f_v(h_{\ell}^{-1}\gamma_v h_r) \dot{d}(h_\ell,h_r).
$$
\end{defn}

\begin{rem} The assumption of unimodularity is used to define the right-invariant Radon measure on $H_{\gamma_v}(F_v)\backslash H(F_v)$.
\end{rem}

\begin{prop} \label{prop-roi-conv}
If $\gamma_v\in G(F_v)$ is relevant, unimodular and closed then the integral  $\mathrm{RO}_{\gamma_v}^{\chi_v}(f_v)$ is absolutely convergent.
\end{prop}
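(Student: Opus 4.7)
The plan is to show that the integrand has compact support on $H_{\gamma_v}(F_v)\backslash H(F_v)$; combined with continuity this will immediately yield absolute convergence.

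Before anything, I would dispense with the existence of the quotient measure. Since $\gamma_v$ is unimodular, $H_{\gamma_v}$ is a direct product of a reductive and a unipotent group, and both factors give unimodular groups of $F_v$-points; likewise $H(F_v)$ is unimodular by our standing hypothesis on $H$. Hence the right-invariant Radon measure $\dot{d}(h_\ell,h_r)$ on $H_{\gamma_v}(F_v)\backslash H(F_v)$ is well-defined, so the only issue is convergence.

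The bulk of the argument is to upgrade the algebraic closedness of the orbit $H\cdot\gamma_v\subseteq G$ to topological closedness of the single $H(F_v)$-orbit $H(F_v)\cdot\gamma_v$ in $G(F_v)$. The closedness of $H\cdot\gamma_v$ as a subscheme of $G$ immediately implies that $(H\cdot\gamma_v)(F_v)$ is closed in $G(F_v)$ for the $v$-adic topology. The orbit morphism $H\to H\cdot\gamma_v$ descends to an isomorphism of $F_v$-varieties $H/H_{\gamma_v}\xrightarrow{\sim}H\cdot\gamma_v$, so by the (Archimedean or non-Archimedean) implicit function theorem each individual $H(F_v)$-orbit inside $(H\cdot\gamma_v)(F_v)$ is open. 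These orbits are parameterized by (a subset of) the Galois cohomology set $H^1(F_v,H_{\gamma_v})$, which is finite by Borel--Serre since $H_{\gamma_v}$ is a linear algebraic group over the local field $F_v$. Thus $(H\cdot\gamma_v)(F_v)$ decomposes as a finite disjoint union of clopen $H(F_v)$-orbits, each of which (in particular $H(F_v)\cdot\gamma_v$) is closed in $G(F_v)$. Invoking the standard theorem that a continuous transitive action of a locally compact second countable group on a locally compact Hausdorff space descends to a homeomorphism from the quotient by the stabilizer onto the orbit, one concludes that
$$
\iota:H_{\gamma_v}(F_v)\backslash H(F_v)\lto H(F_v)\cdot\gamma_v,\qquad (h_\ell,h_r)\lmto h_\ell^{-1}\gamma_v h_r,
$$
is a homeomorphism.

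From this, the convergence is routine: the set $K:=\iota^{-1}\bigl(\mathrm{supp}(f_v)\cap H(F_v)\cdot\gamma_v\bigr)$ is compact as the image under $\iota^{-1}$ of a closed subset of the compact set $\mathrm{supp}(f_v)$; the integrand vanishes off $K$; and on $K$ the continuous functions $\chi_v$ and $f_v$ are bounded, so that the integral is majorized by $\|f_v\|_\infty\cdot\sup_K|\chi_v|\cdot\dot{d}(K)<\infty$. The principal obstacle is precisely the topological closedness of the single $H(F_v)$-orbit inside $G(F_v)$: it is here that one simultaneously needs the closedness hypothesis on $\gamma_v$, the finiteness of Galois $H^1$ over a local field, and the openness of $F_v$-orbits in a homogeneous space. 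Once that topological fact is in hand, everything else is formal.
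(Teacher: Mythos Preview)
Your proof is correct and follows the same strategy as the paper: show that the orbit map $H_{\gamma_v}(F_v)\backslash H(F_v)\to G(F_v)$ is a closed embedding (hence proper), so that the pullback of $f_v$ is compactly supported and the Radon integral is finite. The paper asserts this closedness tersely from the closedness of the underlying map of schemes; your argument via openness of $H(F_v)$-orbits and finiteness of $H^1(F_v,H_{\gamma_v})$ makes explicit why the single $H(F_v)$-orbit, and not merely the full set of $F_v$-points of the geometric orbit, is closed in $G(F_v)$.
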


\begin{proof}
 Since the measure $\dot{d}(h_\ell,h_r)$ is a Radon measure on $H_{\gamma_v}(F_v) \backslash H(F_v)$, to show the integral is well-defined and absolutely convergent it is enough to construct a  pull-back map
\begin{align} \label{2-maps}
  C_c^\infty(G(F_v))  \lto C_c^\infty(H_{\gamma_v} \backslash H(F_v))
\end{align}
attached to the natural map $H_{\gamma_v} \backslash H(F_v) \lto G(F_v)$.  But this map is a closed embedding
 (since the underlying map of schemes is a closed embedding) and is therefore proper.  Thus 
the pull-back map in \eqref{2-maps} exists.   
\end{proof}

\subsection{Global relative orbital integrals }

\begin{defn} 
For $f\in C_c^\infty(A_G\backslash G(\A_F))$ and relevant, unimodular and closed $\gamma \in G(F)$ we define the \textbf{global relative orbital integral}:
$$
\textrm{RO}_{\gamma}^\chi(f) = \int_{A_{G,H}H_\gamma(\A_F)\backslash H(\A_F)} \chi(h_{\ell},h_r) f(h_{\ell}^{-1}\gamma h_r) \dot{d}(h_\ell,h_r).
$$
\end{defn}

\begin{prop} \label{prop-glob-roi}
If $\gamma\in G(F)$ is relevant unimodular closed then the integral defining
$
\mathrm{RO}_{\gamma}^{\chi}(f)
$
converges absolutely.
\end{prop}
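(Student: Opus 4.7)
The strategy is to mirror the proof of Proposition~\ref{prop-roi-conv} at the adelic level, accounting for the central tori $A_G$ and $A_{G,H}$. Because $\gamma$ is closed, the orbit map
\[
\phi: H_\gamma \backslash H \lto G, \qquad (h_\ell,h_r) \lmto h_\ell^{-1}\gamma h_r,
\]
is a closed immersion of $F$-schemes, so the induced map of topological spaces
\[
\phi_{\A}: H_\gamma(\A_F)\backslash H(\A_F) \lto G(\A_F)
\]
is a closed embedding and therefore proper.

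Next I would pass to the quotients by the central tori. Since $A_{G,H}\subset A_G\times A_G$ is central, its action on $G$ factors through the homomorphism $\delta:A_{G,H}\to A_G$, $(a_\ell,a_r)\mapsto a_\ell^{-1}a_r$, whose image $A_G'$ is a closed subgroup of $A_G$. The map $\phi$ is equivariant for this action, and the $A_G$-action on $G(\A_F)$ is trivial in $A_G\backslash G(\A_F)$, so $\phi_\A$ descends to a well-defined continuous map
\[
\bar\phi: A_{G,H} H_\gamma(\A_F)\backslash H(\A_F) \lto A_G\backslash G(\A_F).
\]
To verify that $\bar\phi$ is proper, take a compact $K\subset A_G\backslash G(\A_F)$ lifting to compact $\tilde K\subset G(\A_F)$. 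The preimage is the image in the source of $\phi_\A^{-1}(A_G\tilde K)$, and the point is that the intersection $A_G\tilde K\cap\phi_\A(H_\gamma(\A_F)\backslash H(\A_F))$ is contained in $A_G'\cdot\tilde K''$ for some compact $\tilde K''$, because the stabilizer in $A_G$ (under left multiplication) of the closed orbit $\phi(H)$ is precisely $A_G'$. Quotienting by $A_{G,H}$ in the source (which surjects onto $A_G'$ via $\delta$) then yields a compact preimage $\bar\phi^{-1}(K)$.

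Granted properness of $\bar\phi$, the pullback $\bar\phi^{\ast}f$ has compact support in $A_{G,H}H_\gamma(\A_F)\backslash H(\A_F)$. The integrand $\chi(h_\ell,h_r)f(h_\ell^{-1}\gamma h_r)$ is well-defined on this quotient: $\chi$ is trivial on $A_{G,H}$ by hypothesis and trivial on $H_\gamma(\A_F)$ by the relevance of $\gamma$. Since $|\chi|$ is continuous, it is bounded on the compact support of $\bar\phi^{\ast}f$, and integration of the product against the Radon measure $\dot d(h_\ell,h_r)$ yields a finite value. The main obstacle is the properness assertion above, and in particular the identification of the $A_G$-stabilizer of $\phi(H)$ with $A_G'$; this is where the structural hypothesis that $H=H^r\times H^u$ is a direct product of a reductive and a unipotent group (so that $A_H$, and hence $A_{G,H}$, exhausts the central split part of the stabilizer) enters in a substantive way.
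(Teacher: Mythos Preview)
Your core reduction is exactly the paper's: since $\gamma$ is closed, the orbit map $H_\gamma\backslash H \to G$ is a closed immersion of schemes, hence the induced map $H_\gamma(\A_F)\backslash H(\A_F)\to G(\A_F)$ is a closed embedding and therefore proper, so pullback preserves compact support and the Radon integral is finite. The paper's entire proof is that one sentence; it does \emph{not} discuss the passage to the quotients by $A_{G,H}$ on the source and $A_G$ on the target at all, simply asserting that properness of $H_\gamma\backslash H(\A_F)\to G(\A_F)$ suffices.

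Where you go beyond the paper is in trying to justify that passage carefully, and this is where your write-up has a genuine gap. Your argument for properness of $\bar\phi$ rests on the claim that the $A_G$-stabilizer of the closed orbit $H\cdot\gamma$ in $G$ coincides with $A_G'=\delta(A_{G,H})$. One inclusion is immediate (if $a=a_\ell a_r^{-1}$ with $(a_\ell,a_r)\in A_{G,H}\subset H$ then $a\gamma=(a_\ell,a_r)\cdot\gamma$), but the reverse inclusion does not follow from the direct-product hypothesis $H=H^r\times H^u$ in the way you suggest: that hypothesis constrains the internal structure of $H$, not how the center $Z_G$ meets the image of $H$ inside $G\times G$. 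If $a\gamma=h_\ell\gamma h_r^{-1}$ for some $(h_\ell,h_r)\in H(\A_F)$, you only learn that $(a^{-1}h_\ell,h_r)$ lies in the centralizer of $\gamma$ for the $G\times G$-action, which need not force $a\in\delta(A_{G,H})$. Your own closing sentence flags this as ``the main obstacle,'' so as written the proof is incomplete precisely at the step where it exceeds what the paper provides. In short: the approach matches the paper, the extra care about central quotients is reasonable to want, but the specific mechanism you propose to handle it is not established.
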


\begin{proof}
As in the proof of Proposition \ref{prop-roi-conv} it suffices to show that the map
\begin{align*} 
H_\gamma \backslash H(\A_F)  \lto G(\A_F)
\end{align*}
is proper, but this is obvious since it is a closed embedding.
\end{proof}

\subsection{The geometric side of the general simple relative trace formula}
Let 
$$
F_{\infty}:=\prod_{v|\infty}F_v
$$
be the product of the archimedian completions of $F$.  
We note that $A \leq H_{\gamma}(F_\infty)$ for all $\gamma \in G(F)$, and 
\begin{align} \label{tama}
\tau(H_\gamma):=\mathrm{meas}_{d(h_\ell, h_r)_{\gamma}}(AH_{\gamma}(F) \backslash H_\gamma(\A_F))
\end{align}
is finite if $\gamma$ is elliptic.  
Let
\begin{align} \label{geo-side}
K_{f}(x,y)=\sum_{\gamma \in G(F)}f(x^{-1} \gamma y).
\end{align}
This kernel is equal to the earlier kernel of \eqref{ker-above} under the additional assumption that $R(f)$ has cuspidal image.  With this in mind, combining Theorem \ref{thm-spectral-side} and the following theorem immediately implies Theorem \ref{thm-stf}:

\begin{thm} \label{thm-geo-side} Assume that 
if the $H(\A_F)$-orbit of $\gamma \in G(F)$ meets the support of $f$ then $\gamma $ is elliptic, unimodular and closed. Then
$$
  \sum_{[\gamma] \in \Gamma(F)} \tau(H_\gamma)\mathrm{RO}_\gamma^\chi(f) = 
\int_{A_{G,H}H(F)\backslash H(\A_F)} \chi(h_{\ell},h_r)K_{f}(h_\ell,h_r)d(h_\ell,h_r).
$$
Moreover, the sum on the left  and the integral on the right are absolutely convergent.
\end{thm}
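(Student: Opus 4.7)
The plan is to expand the kernel $K_f$ as a sum over $G(F)$, group by $H(F)$-orbits, and then unfold the quotient $A_{G,H}H(F)\backslash H(\A_F)$ along each orbit to produce the orbital-integral sum on the left. Writing
$$
K_f(h_\ell,h_r)=\sum_{[\gamma]\in\Gamma(F)}\sum_{(a_\ell,a_r)\in H_\gamma(F)\backslash H(F)} f(h_\ell^{-1}a_\ell\gamma a_r^{-1}h_r)
$$
and substituting into the right-hand side, the change of variables $(h_\ell,h_r)\mapsto(a_\ell^{-1}h_\ell,a_r^{-1}h_r)$ together with triviality of $\chi$ on $H(F)$ trades the inner sum for an enlargement of the integration domain, yielding
$$
\sum_{[\gamma]\in\Gamma(F)}\int_{A_{G,H}H_\gamma(F)\backslash H(\A_F)}\chi(h_\ell,h_r)f(h_\ell^{-1}\gamma h_r)\,d(h_\ell,h_r).
$$

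Next I would factor each summand through $A_{G,H}H_\gamma(\A_F)\backslash H(\A_F)$. Since $A_{G,H}$ acts on $\gamma$ by the scalar $a_\ell a_r^{-1}\in A_G$ and $f$ descends from $A_G\backslash G(\A_F)$, one has $A_{G,H}\cap H_\gamma=A$ and the factor $f(h_\ell^{-1}\gamma h_r)$ is left $H_\gamma(\A_F)$-invariant; the factor $\chi(h_\ell,h_r)$ is $H_\gamma(\A_F)$-invariant precisely when $\gamma$ is relevant. For irrelevant $\gamma$ the integral of $\chi$ over the fiber $A_{G,H}H_\gamma(F)\backslash A_{G,H}H_\gamma(\A_F)\cong AH_\gamma(F)\backslash H_\gamma(\A_F)$ vanishes by the usual character-orthogonality argument on a group of finite measure, where finiteness of the measure uses the ellipticity assumption, so such classes drop out. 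For relevant $\gamma$ the fiber integral instead produces exactly the volume $\tau(H_\gamma)$ of \eqref{tama}, and the remaining integral over $A_{G,H}H_\gamma(\A_F)\backslash H(\A_F)$ is by construction $\mathrm{RO}_\gamma^\chi(f)$, with the Haar-measure conventions of \S\ref{ssec-Haar} ensuring agreement.

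The main obstacle, which also retroactively justifies the Fubini-type interchange used above, is absolute convergence. Running the entire computation with $|f|$ in place of $f$ and the trivial character reduces the question to absolute convergence of $\sum_{[\gamma]}\tau(H_\gamma)\mathrm{RO}_\gamma^{1}(|f|)$. Each individual term is finite by Proposition \ref{prop-glob-roi} together with the ellipticity assumption for the $\tau$-factor, so it suffices to show that only finitely many classes contribute. The hypothesis on $f$ forces every contributing $\gamma$ to be elliptic, unimodular and closed; closedness ensures that the orbit map $H_\gamma\backslash H\to G$ is a closed embedding (as used in Propositions \ref{prop-roi-conv} and \ref{prop-glob-roi}), and combining this with compactness of $\mathrm{supp}(f)$ in $A_G\backslash G(\A_F)$ and the discreteness of $G(F)$ in $G(\A_F)$, a reduction-theoretic argument analogous to the one used in the absolute simple trace formula yields the required finiteness of the set of contributing relative classes. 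This arithmetic finiteness is the delicate point; once it is in hand, the algebraic unfolding and the measure bookkeeping above complete the proof.
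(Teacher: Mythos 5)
Your algebraic unfolding is exactly the paper's: expand $K_f$ over $\Gamma(F)$, trade the inner sum over $H_\gamma(F)\backslash H(F)$ for an enlargement of the integration domain, and factor through $A_{G,H}H_\gamma(\A_F)\backslash H(\A_F)$, with irrelevant classes killed by character orthogonality on the compact quotient $AH_\gamma(F)\backslash H_\gamma(\A_F)$ and relevant ones producing $\tau(H_\gamma)\,\mathrm{RO}_\gamma^\chi(f)$. That part is correct and matches the paper.

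The gap is precisely at the point you flag as delicate and then leave to an ``analogous reduction-theoretic argument.'' Finiteness of the set of contributing relative classes does not follow from closedness of the orbit map, compactness of $\mathrm{supp}(f)$, and discreteness of $G(F)$ in $G(\A_F)$ alone, and the paper has to work for it. The argument splits into two genuinely separate pieces. First, one must bound the number of \emph{geometric} classes meeting a compact set; this is Lemma \ref{lem-1}, and it uses the GIT quotient $B:G\to X:=G//H$ (available because $H$ is reductive there) together with discreteness of $X(F)$ in $X(\A_F)$ — a statement about the invariant-theoretic quotient, not about $G(F)\subset G(\A_F)$. Second, within a fixed geometric class one must bound the number of $F$-rational classes; the paper does this via Proposition \ref{prop-finite-class}, whose full proof is a Galois-cohomological properness argument (Lemmas \ref{lem-proper} and \ref{lem-cl-comp}), though the remark after it gives the shorter route — closed embedding $H_\gamma\backslash H(\A_F)\hookrightarrow G(\A_F)$ plus discreteness of $H_\gamma\backslash H(F)$ therein — which is roughly what you gesture at. Even granting that, your sketch is missing the geometric-class finiteness entirely. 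Finally, everything above is carried out only for $H$ reductive; the paper then reduces the general case (where $H=M_H\times N_H$ with $N_H$ unipotent) by integrating $\chi f$ over the compact quotient $N_H(F)\backslash N_H(\A_F)$ to produce a new compactly supported test function $\widetilde f$, a step your proof omits. One small additional point: your reduction to the ``trivial character'' should really be to $|\chi|$, which need not be trivial; the paper takes $\chi=|\chi|$ and $f\geq 0$, and then the vanishing of irrelevant terms is no longer available, which is one reason the convergence argument must proceed directly through the finiteness of contributing classes rather than through the identity itself.
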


In the theorem we use the notation $[\gamma]$ for the class of $\gamma$; we will continue to use this convention.  We will also denote by $[\gamma]^{\mathrm{geo}}$ the geometric class of $\gamma$.
To prove Theorem \ref{thm-geo-side}, it is convenient to first prove the following proposition:

\begin{prop} \label{prop-finite-class}
Let $C \subset G(\A_F)$ be a 
compact subset. Then, if $H$ is reductive,
there exist only finitely many closed classes $[\gamma] \in \Gamma(F)$ such that  $H(\A_F)\cdot \gamma' \cap C \neq \emptyset$ for some $\gamma' \in [\gamma]$.
\end{prop}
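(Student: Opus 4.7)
The plan is to use geometric invariant theory. Since $H$ is reductive and acts on the affine $F$-variety $G$, the categorical quotient
\[
\pi : G \longrightarrow Q := \mathrm{Spec}\,\OO(G)^H
\]
exists as an affine $F$-scheme of finite type, $\pi$ is $H$-invariant, and after base change to $\bar{F}$ each geometric fiber of $\pi$ contains a unique closed $H$-orbit (Mumford). First I would observe that, since $Q$ is affine, a closed embedding $Q \hookrightarrow \A_F^n$ realizes $Q(F)$ as a discrete subset of $Q(\A_F)$ (because $F^n$ is discrete in $\A_F^n$). The image $\pi(C) \subset Q(\A_F)$ is compact, so $\pi(C) \cap Q(F)$ is finite. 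By the $H$-invariance of $\pi$, if $H(\A_F)\cdot\gamma$ meets $C$ then $\pi(\gamma) \in \pi(C)\cap Q(F)$, so $\pi(\gamma)$ takes only finitely many values.

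It then suffices to show, for each $q$ in this finite set, that only finitely many closed classes $[\gamma] \in \Gamma(F)$ map to $q$. If $\gamma \in G(F)$ is closed with $\pi(\gamma) = q$, then $H\cdot\gamma$ is a closed $F$-subscheme of $G$, and its base change to $\bar{F}$ is therefore a closed $H_{\bar{F}}$-orbit in $\pi^{-1}(q)_{\bar{F}}$; by GIT uniqueness it equals the unique closed orbit in that geometric fiber. All closed $\gamma$ over $q$ therefore lie in a single geometric class $[\gamma_0]^{\mathrm{geo}}$. Picking such a $\gamma_0$, the standard twisting construction in non-abelian Galois cohomology identifies the set of $H(F)$-orbits in $(H\cdot\gamma_0)(F)$ with $\ker\bigl(H^1(F, H_{\gamma_0}) \to H^1(F, H)\bigr)$. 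The Borel--Serre finiteness theorem for Galois cohomology of linear algebraic groups over number fields guarantees that $H^1(F, H_{\gamma_0})$ is finite, so only finitely many $H(F)$-classes lie over $q$. Combining these two finitenesses proves the proposition.

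The main obstacle is the interplay between $F$-rational and $\bar{F}$-geometric data: one needs that a closed $H(F)$-orbit remains closed after base change to $\bar{F}$ (which follows since closed immersions are preserved by base change), so that Mumford's unique-closed-orbit statement applies inside each fiber of $\pi$; and one needs Borel--Serre finiteness to bound the non-abelian cohomology measuring the fibers of the map from $H(F)$-classes to geometric classes. Once these two ingredients are in place, the argument reduces to GIT plus the discreteness of the $F$-points in the $\A_F$-points of any affine $F$-scheme, both of which are routine.
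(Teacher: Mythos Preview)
Your first step, using the GIT quotient $\pi:G\to Q$ to reduce to finitely many geometric classes, is exactly the paper's Lemma~\ref{lem-1}. The problem is in the second step.

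Your claim that $H^1(F,H_{\gamma_0})$ is finite is false in general, and this is not what Borel--Serre proves. The Borel--Serre theorem asserts only that the fibers of the diagonal map $H^1(F,L)\to\prod_v H^1(F_v,L)$ are finite; it says nothing about the global set itself. For an explicit counterexample, take $G=\SL_2$ over $\QQ$, let $H$ be the diagonal copy of $G$ in $G\times G$, and let $\gamma_0=\begin{pmatrix}0&-1\\1&0\end{pmatrix}$. Then $H_{\gamma_0}$ is the norm-one torus $T=\mathrm{Res}^{(1)}_{\QQ(i)/\QQ}\GG_m$, and $H^1(\QQ,T)\cong \QQ^\times/N_{\QQ(i)/\QQ}(\QQ(i)^\times)$, which is an infinite elementary abelian $2$-group (generated by $-1$ and the primes $p\equiv 3\pmod 4$). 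Since $H^1(\QQ,\SL_2)$ is trivial, the kernel you wrote down is all of $H^1(\QQ,T)$, and the geometric class of $\gamma_0$ really does contain infinitely many $\SL_2(\QQ)$-conjugacy classes.

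The point is that the compact set $C$ is doing real work in the second step, not just the first: only finitely many of those infinitely many rational classes have adelic orbit meeting $C$. The paper handles this by showing that the characteristic map $cl:H_\gamma\backslash H(\A_F)\to H^1(\A_F,H_\gamma)$ takes compact sets to compact sets and that $H^1(F,H_\gamma)\to H^1(\A_F,H_\gamma)$ is proper (this properness is where Borel--Serre is actually invoked). Alternatively, as the paper remarks, one can bypass the cohomological machinery and simply use that $H_\gamma\backslash H(F)$ is discrete and closed in $H_\gamma\backslash H(\A_F)$, which embeds closedly in $G(\A_F)$, so its intersection with $C$ is finite. Either way, you must feed the constraint from $C$ back into the argument within each geometric class.
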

\noindent Here the $\cdot$ refers to the action \eqref{action}.

We will prove this in several steps.

\begin{lem} \label{lem-1}
Let $C \subset G(\A_F)$ be a 
compact subset. Then, if $H$ is reductive,
there exist only finitely many closed classes $[\gamma]^{\mathrm{geo}} \in \Gamma^{\mathrm{geo}}(F)$ such that $H(\A_F)\cdot \gamma' \cap C \neq \emptyset$ for some $\gamma' \in [\gamma]^{\mathrm{geo}}$.
\end{lem}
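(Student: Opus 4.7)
The plan is to pass to the GIT quotient and exploit discreteness of $F$-points of affine varieties in their adelic points. Since $H$ is reductive and acts on the affine $F$-variety $G$ via the action \eqref{action}, Mumford's GIT (valid in characteristic zero for any reductive group on an affine scheme) produces an affine $F$-scheme
$$
G/\!\!/H := \mathrm{Spec}(F[G]^H)
$$
together with an $H$-invariant morphism $\pi : G \to G/\!\!/H$ with the property that each geometric fiber over $(G/\!\!/H)(\bar F)$ contains a unique closed $H(\bar F)$-orbit. In particular, if $\gamma_1, \gamma_2 \in G(F)$ both have closed $H(\bar F)$-orbits, then $[\gamma_1]^{\mathrm{geo}} = [\gamma_2]^{\mathrm{geo}}$ if and only if $\pi(\gamma_1) = \pi(\gamma_2)$ in $(G/\!\!/H)(F)$. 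Thus the assignment $[\gamma]^{\mathrm{geo}} \mapsto \pi(\gamma)$ is an injection from the set of closed geometric classes with an $F$-representative into $(G/\!\!/H)(F)$.

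Next I would observe that the morphism $\pi$ induces a continuous map of adelic points $\pi_{\A} : G(\A_F) \to (G/\!\!/H)(\A_F)$, so $\pi_{\A}(C)$ is compact. The $H$-invariance of $\pi$ means that if $\gamma' \in G(F)$ and $h \in H(\A_F)$ satisfy $h \cdot \gamma' \in C$, then
$$
\pi(\gamma') = \pi_{\A}(h \cdot \gamma') \in \pi_{\A}(C) \cap (G/\!\!/H)(F).
$$
By the injectivity observed above, it therefore suffices to show that $\pi_{\A}(C) \cap (G/\!\!/H)(F)$ is finite.

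For this final step, I would use that $G/\!\!/H$ is affine, hence admits a closed $F$-immersion into some $\A^n_F$. Since $F$ is discrete in $\A_F$ (a standard consequence of the compactness of $\A_F/F$), the product $F^n$ is discrete in $\A_F^n$, and the induced map $(G/\!\!/H)(F) \hookrightarrow \A^n(F) = F^n$ is a closed embedding into $\A^n(\A_F) = \A_F^n$. In particular $(G/\!\!/H)(F)$ is a closed discrete subset of $(G/\!\!/H)(\A_F)$, so its intersection with the compact set $\pi_{\A}(C)$ is finite, as desired.

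The main technical point will be the GIT input: ensuring that in characteristic zero the categorical quotient $G/\!\!/H$ separates closed $H(\bar F)$-orbits is standard (Mumford, Luna), but one should verify that the $F$-rational closed orbit corresponding to an $F$-rational point $\gamma$ really does give an $F$-rational point of $G/\!\!/H$, which is immediate from functoriality. Everything else is formal continuity and the discreteness of $F$ in $\A_F$; no arithmetic beyond this is required, and the reductivity of $H$ is used exactly once, to guarantee that $F[G]^H$ is finitely generated so that $G/\!\!/H$ is an honest affine variety of finite type.
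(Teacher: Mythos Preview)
Your proof is correct and is essentially identical to the paper's own argument: both pass to the affine categorical (GIT) quotient $X = G/\!\!/H$, use that the quotient map separates closed orbits so that closed geometric classes inject into $X(F)$, and conclude by observing that the image of $C$ in $X(\A_F)$ is compact while $X(F)$ is discrete and closed in $X(\A_F)$. The only difference is cosmetic---you spell out the discreteness of $X(F)$ via a closed immersion into affine space, whereas the paper states it directly.
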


\begin{proof}
Since $H$ is reductive there exists a categorical quotient $X$ of $G$ by the action \eqref{action} of $H$; it is an affine scheme of finite type over $F$.  Let 
  $$
  B : G \lto X
  $$ 
be the canonical quotient map.  
Note that if $\gamma,\gamma' \in G(F)$ are closed then $B(\gamma)=B(\gamma')$ if and only if $\gamma$ and $\gamma'$ define the same element of $\Gamma^{\mathrm{geo}}(F)$.  Moreover, assuming $\gamma'$ is closed, if $H(\A_F) \cdot \gamma' \cap C \neq \emptyset$ then $B(C)$ contains the geometric class of $\gamma'$.  On the other hand $B(C) \cap X(F)$ is finite because $B(C)$ is compact and $X(F) \subseteq X(\A_F)$ is discrete and closed.

\end{proof}

We now show that for each closed $\gamma$ there are only finitely many classes in $[\gamma]^{\mathrm{geo}}$ that intersect $C$.  To do this it is convenient to review some Galois cohomology.  

Let $S_0$ be a finite set of places of $F$ including the infinite places.  For $L$ a smooth linear algebraic group over $\OO_F^{S_0}$ let
 $H^1(\A_F,L)$ denote the adelic cohomology of $L$:
\begin{align*}
&H^1(\A_F,L)
:=\left\{(\sigma_v) \in \prod_v H^1(F_v,L): \sigma_v \in
H_{nr}^1(F_v,L) \textrm{ for a.e.~}v \not \in S_0\right\}.
\end{align*}
Here
$$
H_{nr}^1(F_v,L):=\mathrm{Im}\left(H^1(\Gal(F_v^{nr}/F_v),L(\OO_{F_v}^{nr})) \to H^1(F_v,L)\right)
$$
where $F_v^{nr}$ is the maximal unramified extension of $F_v$ and $\OO_{F_v}^{nr}$ is its ring of integers. We endow 
$H^1(F_v,L)$ with the discrete topology for all $v$ and endow
$H^1(\A_F,L)$ with the restricted direct product topology with respect to the subgroups $H^1_{nr}(F_v,L)$ for $v \not \in S_0$ (again given the discrete topology). 
 We have the following lemma:
\begin{lem} \label{lem-proper} The diagonal map $H^1(F,L) \to \prod_vH^1(F_v,L)$ 
has image in $H^1(\A_F,L)$ and the induced map
$$
H^1(F,L) \lto H^1(\A_F,L)
$$
is proper if we give $H^1(F,L)$ the discrete topology.
\end{lem}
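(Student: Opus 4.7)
The plan is to verify the two assertions separately. For the first, I would fix $\sigma \in H^1(F,L)$ and represent it by a continuous cocycle $c : \Gal(K/F) \to L(K)$ attached to some finite Galois extension $K/F$. Using the fact that $L$ is a smooth group scheme over $\OO_F^{S_0}$ and that $c$ takes only finitely many values, one may enlarge $S_0$ to a finite set $S$ so that $K/F$ is unramified outside $S$ and $c$ takes values in $L(\OO_K[1/S])$. For any place $v \not\in S$ and any $w\mid v$ in $K$, the extension $K_w/F_v$ is unramified, so $K_w \subseteq F_v^{nr}$, and $c|_{\Gal(K_w/F_v)}$ is the inflation under $\Gal(F_v^{nr}/F_v) \twoheadrightarrow \Gal(K_w/F_v)$ of a cocycle with values in $L(\OO_{F_v}^{nr})$. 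Hence $\sigma_v \in H^1_{nr}(F_v,L)$ for all $v \not\in S$, so the diagonal map lands in $H^1(\A_F,L)$.

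For properness, since $H^1(F,L)$ carries the discrete topology, it suffices to show that the preimage of any compact $C \subseteq H^1(\A_F,L)$ is finite. Each $H^1(F_v,L)$ is discrete and $H^1_{nr}(F_v, L)$ is open in $H^1(\A_F, L)$, so a standard compactness argument shows that $C$ is contained in a set of the form $\prod_{v \in T} C_v \times \prod_{v \not\in T} H^1_{nr}(F_v, L)$ with $T \supseteq S_0$ finite and each $C_v \subseteq H^1(F_v,L)$ finite. Consequently, any $\sigma \in H^1(F,L)$ mapping into $C$ is unramified outside $T$.

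The main obstacle, and the substantive content of the lemma, is then to show that the set of classes in $H^1(F,L)$ unramified outside $T$ is finite. I would invoke the classical finiteness theorem for Galois cohomology with restricted ramification of a smooth affine group scheme over a Dedekind domain of arithmetic type, going back to Borel and Serre in the reductive case and extended to general smooth affine groups by later authors: for a smooth affine group scheme $L$ over $\OO_F^T$, the pointed set $H^1_{\mathrm{et}}(\OO_F^T, L)$ is finite, and under smoothness of $L$ it is identified with the subset of $H^1(F,L)$ consisting of classes unramified outside $T$. Granting this, the preimage of $C$ is contained in a finite set and hence is finite, which gives properness.
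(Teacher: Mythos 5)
Your argument is correct in outline, but it takes a genuinely different route from the paper's. The paper does not invoke finiteness of $H^1_{\mathrm{et}}(\OO_F^T,L)$ as a black box. Instead, after the same reduction to showing finiteness of the set of classes in $H^1(F,L)$ unramified outside a fixed finite $S$, it argues directly: using the schematic closure $L^\circ$ of the identity component and the quotient $\pi_0(L)=L/L^\circ$, it shows via Lang's theorem and Hensel's lemma that $H^1(\Gal(F_v^{nr}/F_v),L^\circ(\OO_{F_v}^{nr}))$ vanishes for $v\notin S$, and via the valuative criterion of properness for $\mathrm{Aut}(\pi_0(L))$ that the map $H^1_{nr}(F_v,L)\to H^1_{nr}(F_v,\pi_0(L))$ is injective. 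It then controls the image of $\sigma$ in $H^1(F,\pi_0(L))$ by the Hermite--Minkowski finiteness of \'etale $F$-algebras of bounded degree unramified outside $S$, and finally deals with the fiber over a fixed such class using only the finiteness of $H^1(F_v,L)$ together with the Borel--Serre theorem that the fibers of $H^1(F,L)\to\prod_v H^1(F_v,L)$ are finite (Serre, Galois Cohomology, \S III.4.6). In other words, the paper proves the needed special case of the ``classical finiteness theorem'' you cite, rather than importing it. What your approach buys is brevity, at the cost of leaning on a substantial theorem (due in this generality to work such as Gille--Moret-Bailly) whose proof is itself of comparable difficulty; what the paper's approach buys is self-containment, with Borel--Serre as the only nonelementary input. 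One caveat on the step where you say $H^1_{\mathrm{et}}(\OO_F^T,L)$ is ``identified with'' the classes unramified outside $T$: what the argument actually needs, and what holds, is that the restriction map $H^1_{\mathrm{et}}(\OO_F^T,L)\to H^1(F,L)$ \emph{surjects} onto the unramified-outside-$T$ classes (this follows by spreading out the local integral models over the Dedekind base); injectivity of this map is neither needed nor automatic, so it is cleaner to phrase the step as finiteness of the image rather than as an identification.
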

Let $S \supseteq S_0$ be a finite set of places of $F$.  It is convenient to say that an element $\sigma=(\sigma_v) \in H^1(\A_F,L)$ is \textbf{unramified outside of $S$} if $\sigma_v \in H_{nr}^1(F_v,L)$ for all $v \not \in S$ and that $\sigma \in H^1(F,L)$ is \textbf{unramified outside of $S$} if $\sigma$ maps to an element of $H^1(\A_F,L)$ unramified outside of $S$ under the diagonal map (i.e. the map of Lemma \ref{lem-proper}).
 
\begin{proof} It is not hard to see that $H^1(F,L)$ has image in $H^1(\A_F,L)$. We now prove the properness statement.  For this we follow the proof of \cite[Proposition 4.4]{HS}.
Since $H^1(F_v,L)$ is finite for all $v$ it is enough to show that for all sufficiently large $S \supseteq S_0$, 
the inverse image of $\prod_{v \not \in S}H^1_{nr}(F_v,L)$ in $H^1(F,L)$ is finite, in other words, there are only finitely many classes in $H^1(F,L)$ unramified outside of $S$.
We denote by $L^\circ$ the schematic closure in $L$ of the neutral component of $L_F$.  
By enlarging $S$ if necessary we can assume that 
$L$, $L^\circ$, $\pi_0(L):=L/L^\circ$, and $\mathrm{Aut}(\pi_0(L))$ are all smooth over $\OO_{F}^S$ and that the sequence
$$
1 \lto L^\circ \lto L \lto \pi_0(L) \lto 1
$$
is exact, which in turn yields a cartesian diagram with exact rows:
\begin{align}
\label{exact-1}
\begin{CD}
H^1(\Gal(F^{nr}_v/F_v),L^\circ(\OO_{F_v}^{nr})) @>>> H^1(\Gal(F^{nr}_v/F_v),L(\OO_{F_v}^{nr})) @>>{\alpha}> H^1(\Gal(F^{nr}_v/F_v),\pi_0(L)(\OO_{F_v}^{nr}))\\
@VVV @VVV @VV{\beta}V\\
H^1(F_v,L^{\circ}) @>>> H^1(F_v,L) @>>> H^1(F_v,\pi_0(L))
\end{CD}
\end{align}
for all $v \not \in S$.
All of the maps are the natural ones; we have just labeled two of them $\alpha$ and $\beta$.  We now use this diagram to prove that 
the map 
\begin{align} \label{bij}
H^1_{nr}(F_v,L) \lto H^1_{nr}(F_v,\pi_0(L))
\end{align}
is injective.  

We first claim that $H^1(\Gal(F^{nr}_v/F_v),L^\circ(\OO_{F_v}^{nr}))$ is trivial for all $v \not \in S_0$.
  Indeed, let $X$ be an $L^{\circ}_{\OO_{F_v}}$-torsor representing an element.  Then, denoting by $\varpi_v$ a uniformizer for $\OO_{F_v}$ one has
$$
X(\OO_{F_v}/\varpi_v) \neq \emptyset
$$
by Lang's theorem \cite[\S III.2.3]{SerreGC}.  Since $X$ is smooth, Hensel's lemma implies that the map $X(\OO_{F_v})\to X(\OO_{F_v}/\varpi_v)$ is surjective, so in particular 
$X(\OO_{F_v}) \neq \emptyset$, proving our claim.  
This implies that the map $\alpha$ in \eqref{exact-1} is injective.  

We now claim that the map 
\begin{align} \label{inj}
\beta: H^1(\Gal(F^{nr}_v/F_v),\pi_0(L)(\OO_{F_v}^{nr})) \lto H^1(F_v,\pi_0(L)(F_v)),
\end{align}
of \eqref{exact-1} is injective.  Assuming this it follows that \eqref{bij} is injective as asserted above.  
To prove that $\beta$ is injective, let $X_1,X_2$ be two $\pi_0(L)_{\OO_{F_v}}$-torsors isomorphic over $\OO_{F_v}^{nr}$ such that $X_{1F_v} \cong X_{2F_v}$; which is to say that the classes of these torsors map to the same element of $H^1(F_v,\pi_0(L)(F_v))$ under $\beta$.  
The $\OO_{F_v}^{nr}$-isomorphisms between $X_{1\OO_{F_v}^{nr}}$ and $X_{2\OO_{F_v}^{nr}}$ form an $\mathrm{Aut}(\pi_0(L))_{\OO_{F_v}}$-torsor $Y$ such that $Y(F_v) \neq \emptyset$ (since $X_{1F_v} \cong X_{2F_v}$), and $Y(\OO_{F_v}) \neq \emptyset$ if and only if $X_1 \cong X_2$ (over 
$\OO_{F_v}$), i.e. if and only if $X_1$ and $X_2$ represent the same class in $H^1(\Gal(F^{nr}_v/F_v),\pi_0(L)(\OO_{F_v}^{nr}))$.  But $\mathrm{Aut}(\pi_0(L))$ is proper over $\OO_{F_v}$ (even finite), and hence so is $Y$. By the valuative criterion of properness $Y(F_v) \neq \emptyset$ implies $Y(\OO_{F_v}) \neq \emptyset$, implying that $X_1 \cong X_2$ (over $\OO_{F_v}$).  As already remarked, this completes our proof that \eqref{bij} is injective as asserted above.

Suppose that $\sigma \in H^1(F,L)$ is unramified outside of $S$.  Then the image of $\sigma$ in 
$$
\mathrm{Im}\left(H^1(F,\pi_0(L)) \to H^1(\A_F,\pi_0(L))\right)
$$ 
say $\xi$, is also unramified outside of $S$.  The cocycle $\xi$ is attached to the spectrum of an \'etale $F$-algebra (i.e. direct sum of finite extension fields) of degree at most $\pi_0(L)(\bar{F})$ that is unramified outside of $S$.  There are only finitely many such \'etale $F$-algebras, so to complete the proof of the lemma it suffices to fix a cocycle $\xi$ and show that there are only finitely many $\sigma \in H^1(F,L)$ unramified outside of $S$ that map to it.  For this, we combine the fact that $H^1(F_v,L)$ is finite for all $v$ and the injection \eqref{bij} to conclude that there are only finitely many elements of $H^1(\A_F,L)$ unramified outside of $S$ that map to $\xi$.  We now employ the Borel-Serre theorem \cite[\S III.4.6]{SerreGC} which states that the fibers of the diagonal map $H^1(F,L) \to \prod_{v}H^1(F_v,L)$ are finite, to deduce that there are only finitely many $\sigma \in H^1(F,L)$ mapping to $\xi$ that are unramified outside of $S$ and thereby complete the proof of the lemma.
\end{proof}

Now assume that $L \leq M$ are smooth linear algebraic groups over $\OO_F^S$ such that $M$ has connected fibers.  Then the map $M \lto L \backslash M$ is smooth and surjective.  We obtain a characteristic map
$$
cl:L\backslash M(\A_F) \lto H^1(\A_F,L).
$$

\begin{lem} \label{lem-cl-comp} The characteristic map $cl$ maps compact sets to compact sets.
\end{lem}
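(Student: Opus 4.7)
The plan is to prove that $cl$ is continuous when $H^1(\A_F,L)$ carries its restricted-product topology; the compactness of $cl(C)$ will then follow as the continuous image of a compact set. Continuity of $cl$ reduces to two local assertions: \textbf{(a)} each $cl_v\colon L\backslash M(F_v) \to H^1(F_v,L)$ is continuous, equivalently locally constant (since the target is discrete); and \textbf{(b)} $cl_v(L\backslash M(\OO_{F_v})) \subseteq H^1_{nr}(F_v,L)$ for all $v\not\in S$. Granted these, any compact $C \subseteq L\backslash M(\A_F)$ is contained in a product $\prod_v K_v$ with $K_v$ compact in $L\backslash M(F_v)$ and $K_v \subseteq L\backslash M(\OO_{F_v})$ for almost all $v$, so $cl(C) \subseteq \prod_v cl_v(K_v)$ is finite at each place and contained in $H^1_{nr}(F_v,L)$ at almost all places, hence compact in $H^1(\A_F,L)$.

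For \textbf{(a)}, $cl_v$ is invariant under the right action $Lg \cdot m := L(gm)$ of $M(F_v)$ on $L\backslash M(F_v)$, because right translation by $m$ converts the $L$-torsor fiber $Lg$ into the isomorphic $L$-torsor $Lgm$. The fibers of $cl_v$ are therefore unions of $M(F_v)$-orbits, and it suffices to show that each orbit is open. For $y \in L\backslash M(F_v)$ the orbit map $\mu_y\colon M \to L\backslash M$, $m \mapsto y\cdot m$, is a smooth morphism of $F_v$-schemes: after base change to $\bar{F}_v$, lifting $y$ to $\tilde{y} \in M(\bar{F}_v)$ exhibits $\mu_y$ as the composition of left multiplication by $\tilde{y}$ (an isomorphism) with the smooth quotient $M \to L\backslash M$, and smoothness descends. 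Smooth morphisms of $F_v$-schemes induce open maps on $F_v$-points (the implicit function theorem for $v\mid\infty$, Hensel's lemma for $v\nmid\infty$), so $\mu_y(M(F_v))$ is an open neighborhood of $y$; every $M(F_v)$-orbit is thus open and $cl_v$ is locally constant.

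For \textbf{(b)}, fix $v\not\in S$ and $x \in L\backslash M(\OO_{F_v})$. The fiber $X$ of $M \to L\backslash M$ above $x$ is an $L$-torsor over $\OO_{F_v}$, smooth because $L$ is. Over the residue field $\bar{k}$ of $\OO_{F_v}^{nr}$ the torsor $X_{\bar{k}}$ is trivial (as $\bar{k}$ is algebraically closed), so $X(\bar{k}) \neq \emptyset$; Hensel's lemma applied to the smooth $\OO_{F_v}^{nr}$-scheme $X_{\OO_{F_v}^{nr}}$ lifts this to $X(\OO_{F_v}^{nr}) \neq \emptyset$. Any such lift $\xi$ gives a $1$-cocycle $\sigma \mapsto c_\sigma \in L(\OO_{F_v}^{nr})$ defined by $\sigma(\xi) = \xi \cdot c_\sigma$ for $\sigma \in \Gal(F_v^{nr}/F_v)$, whose class maps to $cl_v(x) = [X_{F_v}]$ in $H^1(F_v,L)$, exhibiting $cl_v(x) \in H^1_{nr}(F_v,L)$. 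This argument is morally dual to the one appearing in the proof of Lemma \ref{lem-proper}.

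The most delicate point is verifying the smoothness of $\mu_y$ in \textbf{(a)}---essentially the assertion that $L\backslash M$ is a geometrically homogeneous $M$-space---after which the remainder is routine restricted-product bookkeeping combined with standard torsor-theoretic manipulations at unramified places.
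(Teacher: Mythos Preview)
Your proof is correct and follows the same two-step strategy as the paper: show that each local map $cl_v$ is locally constant, and that $cl_v(L\backslash M(\OO_{F_v})) \subseteq H^1_{nr}(F_v,L)$ for $v \notin S$. Your argument for \textbf{(a)} via right $M(F_v)$-equivariance and smoothness of the orbit map $\mu_y$ is a repackaging of the paper's argument, which instead identifies each fiber of $cl_v$ as the image of an open map ${}_\sigma M(F_v) \to L\backslash M(F_v)$ arising from a twisted form (citing \cite{Conrad}); the two formulations are equivalent, since the fibers of $cl_v$ are precisely the $M(F_v)$-orbits. Your \textbf{(b)} is essentially identical to the paper's.

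One point is worth flagging: you frame the proof as establishing that $cl$ is \emph{continuous}, whereas the paper explicitly remarks after the lemma that the authors do not know whether $cl$ is continuous, and instead concludes the compactness statement directly from \textbf{(a)} and \textbf{(b)}. In fact \textbf{(a)} and \textbf{(b)} together \emph{do} imply continuity of $cl$ for the restricted-product topologies (by the standard criterion for a product of local maps between restricted products), so your framing is justified and arguably sharper. Note, however, that your ``Granted these'' sentence only shows $cl(C) \subseteq \prod_v cl_v(K_v)$ is \emph{contained in} a compact set; to conclude that $cl(C)$ itself is compact you must actually invoke the continuity you announced at the outset, so make that step explicit rather than leaving it to the restricted-product bookkeeping.
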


\begin{rem} We do not know whether $cl$ is continuous.
\end{rem}

\begin{proof}
Any cocycle $\sigma \in cl(L \backslash M(F_v)) \subseteq H^1(F_v,L)$ gives rise to forms ${}_\sigma L,{}_\sigma M$ of $L_{F_v}$ and $M_{F_v}$ equipped with a map
\begin{align} \label{twist-map}
{}_\sigma L(F_v) \backslash {}_\sigma M(F_v) \lto L \backslash M(F_v)
\end{align}
with the property that the inverse image of $\sigma$ under $\mathrm{cl}$ is the image of \eqref{twist-map}
 (compare \cite[\S I.5.4, Corollary 2]{SerreGC}). Moreover, ${}_\sigma M(F_v) \to L \backslash M(F_v)$ is open (see above the proof of \cite[Theorem 4.5]{Conrad}).  Thus the maps $cl:L \backslash M(F_v) \to H^1(F_v,L)$ are continuous for each $v$ if we give $H^1(F_v,L)$ the discrete topology.

The map $M(\OO^{nr}_{F_v}) \to L \backslash M(\OO^{nr}_{F_v})$ is surjective by Hensel's lemma, and it follows that 
 $cl(L \backslash M(\OO_{F_v})) \subseteq H_{nr}^1(F_v,L)$, which completes the proof of the lemma.
\end{proof}

We now prove Proposition \ref{prop-finite-class}:

\begin{proof}[Proof of Proposition \ref{prop-finite-class}]

For a large enough set $S_0$ of places of $F$ including the infinite places we can and do choose models of $H_\gamma \leq H$ over $\OO_F^{S_0}$ that are smooth linear algebraic groups.  We use the same letters to denote these models and use the models to define adelic cohomology as above.  

In view of Lemma \ref{lem-1} it suffices to check that for a given closed $\gamma \in G(F)$ there are finitely many $\gamma'$ in the geometric class of $\gamma$ such that $H(\A_F) \cdot \gamma' \cap C \neq \emptyset$.

One has a commutative diagram with exact rows
$$
\begin{CD}
H_\gamma(F) @>>>  H(F) @> >>H_\gamma \backslash H(F) @>{cl}>> H^1(F,H_\gamma)\\
@VVV	@VVV @VVV @V{a}VV\\
H_{\gamma}(\A_F) @>>> H(\A_F) @>>> H_\gamma \backslash H(\A_F) @>{cl}>>  H^1(\A_F,H_{\gamma}) 
\end{CD}
$$
and the image of the map $cl$ on the upper line can be identified with the set of classes in the geometric class of $\gamma$.
We give the first three sets on the bottom row their natural topologies and give $H^1(\A_F,H_{\gamma})$ the topology described above Lemma \ref{lem-proper}.

Identifying $H_\gamma \backslash H(\A_F)$ with a subset of $G(\A_F)$ via the action of $H(\A_F)$ on $\gamma$, the set of $\gamma'$ in the geometric class of $\gamma$ such that $H(\A_F) \cdot \gamma' \cap C \neq \emptyset$ 
injects into the subset of $cl(H_\gamma \backslash H(F))$ mapping to 
\begin{align} \label{is-compact}
cl(C \cap H_\gamma \backslash H(\A_F))
\end{align}
under $a$.  Since $a$ is proper by 
Lemma \ref{lem-proper}, it suffices to show \eqref{is-compact} is compact.  Since $C \cap H_\gamma \backslash H(\A_F)$ is compact by the fact $\gamma$ is closed, the compactness of \eqref{is-compact} follows from Lemma \ref{lem-cl-comp}.
\end{proof}

\begin{rem} One can prove Proposition \ref{prop-finite-class} in a simpler manner as follows: Let $C \subset G(\A_F)$ be a compact set.  Observe that 
the $\gamma' \in G(F)$ in the geometric class of a given closed $\gamma \in G(F)$ such that $H(\A_F) \cdot \gamma' \cap C \neq \emptyset $
 are in the intersection of $C$ and the image of the topological embeddings  
$$
H_\gamma \backslash H(F) \lto H_{\gamma} \backslash H(\A_F) \lto G(\A_F).
$$
Since $H_{\gamma} \backslash H(\A_F) \cap C$ is compact and $H_\gamma \backslash H(F)$ is discrete and closed in  $H_{\gamma} \backslash H(\A_F)$ we can deduce Proposition \ref{prop-finite-class} from Lemma \ref{lem-1}.  However, the more refined information presented in the discussion above ought to be useful as a starting point towards future work on the stabilization of the relative trace formula.

\end{rem}

We now prove the theorem:
\begin{proof}[Proof of Theorem \ref{thm-geo-side}]
Proceeding formally for the moment, we have
\begin{align} \label{rel-sum}
\sum_{\substack{[\gamma] \in \Gamma(F)\\ \gamma \textrm{ relevant}}} &\tau(H_\gamma)\mathrm{RO}_\gamma^\chi(f)\\
 &= \sum_{\substack{[\gamma] \in \Gamma(F)\\ \gamma \textrm{ relevant}}} \tau(H_\gamma)\int_{(A \backslash A_{G,H}) H_\gamma(\A_F)\backslash H(\A_F)} \chi(h_\ell, h_r) f(h_\ell^{-1}\gamma h_r) \dot{d}(h_\ell,h_r). \nonumber 
\end{align}
Notice that
$$
  \int_{ A_{G,H} H_\gamma (F) \backslash H(\A_F)} \chi(h_\ell,h_r)f(h_\ell^{-1}\gamma h_r)d(h_\ell,h_r) = 0
$$
if $\gamma$ is not relevant, because in this case
$$
 \int_{A  H_\gamma(F)\backslash H_\gamma(\A_F)} \chi(h_{\ell},h_r) d(h_{\ell},h_r)_{\gamma} = 0.
$$
Thus \eqref{rel-sum} is equal to 
\begin{align*}
   \sum_{[\gamma] \in \Gamma(F)}& \int_{A_{G,H} H_\gamma(F) \backslash H(\A_F)} \chi(h_\ell, h_r)f(h_\ell^{-1}\gamma h_r)d(h_\ell, h_r)\\ 
&= \int_{ A_{G,H}  H(F)\backslash  H(\A_F)} \chi(h_\ell, h_r^{-1}) \sum_{\gamma \in G(F)} f(h_\ell^{-1}\gamma h_r)d(h_\ell,h_r)\\
&= \int_{  A_{G,H}  H(F)\backslash  H(\A_F)} \chi(h_\ell,h_r) K_{f}(h_\ell,h_r) d(h_\ell, h_r). 
\end{align*}

We now justify these formal manipulations.  By dominated convergence, it suffices to consider the case where $\chi=|\chi|$ and $f$ is nonnegative; we henceforth assume this.  Suppose that $\gamma \in G(F)$ is relevant, unimodular and closed.  Then by Proposition \ref{prop-glob-roi} one has
$$
|\mathrm{RO}_{\gamma}^\chi(f)| < \infty.
$$
If $\gamma$ is unimodular, closed and elliptic we have
$$
|\tau(H_{\gamma})| < \infty.
$$
If, in addition, $H$ is reductive then the sum over $\gamma$ in \eqref{rel-sum} is finite by 
Proposition \ref{prop-finite-class} so in this case our formal manipulations are justified.  

In the general case, write
$$
H=M_H \times N_H
$$
where $M_H$ (resp.~$N_H$) is reductive (resp.~unipotent).

Decompose the measure $d(h_{\ell},h_r)$ on $A_{G,H}H(F) \backslash H(\A_F)$ as $d(m_\ell,m_r)d(n_\ell,n_r)$ where $d(m_\ell,m_r)$ (resp.~$d(n_\ell,n_r)$) is a measure on $A_{G,H}M_H(F) \backslash M_H(\A_F)$ (resp.~$N_H(F) \backslash N_H(\A_F)$) induced by a Haar measure on $A_{G,H} \backslash M_H(\A_F)$ (resp.~$N_H(\A_F)$).
Since $N_H(F) \backslash N_H(\A_F)$ is compact, we can choose a compact subset
 $\Omega \subseteq N(\A_F)$ such that 
\begin{align*}
\int_{ A_{G,H} H(F) \backslash H(\A_F)}&|\chi|(h_\ell,h_r)K_{f}(h_\ell,h_r)d(h_{\ell},h_r)\\
&=\int_{A_{G,H} M_H(F) \backslash M_H(\A_F)\times \Omega}|\chi|(m_\ell n_\ell,m_rn_r)K_{f}(m_\ell n_\ell,m_rn_r)d(m_{\ell},m_r)d(n_\ell,n_r)\\
&=\int_{ A_{G,H}M_H(F) \backslash M_H(\A_F)}|\chi|(m_\ell,m_r)K_{\widetilde{f}}(m_\ell ,m_r)d(m_{\ell},m_r)
\end{align*}
where
$$
\widetilde{f}(x):=\int_{\Omega}|\chi|(n_\ell,n_r)f(n_\ell^{-1}xn_r)d(n_\ell,n_r) \in C_c^\infty(A \backslash G(\A_F)).
$$
This allows us to reduce to the reductive case with which we have already dealt.
\end{proof}

\section{A relative Weyl law}
\label{sec-Weyl}

Let $G$ be a split adjoint semisimple group over $\QQ$. Note that $G(\QQ) \backslash G(\A_{\QQ})$ is of finite volume but non-compact. We also let $G$ denote the Chevalley group over $\ZZ$ whose generic fiber is $G$.  Fix a maximal compact subgroup $K_\infty \leq G(\RR)$ and a compact open subgroup $K^\infty \leq G(\A_{\QQ}^\infty)$ and let
$$
K:=K_\infty \times K^\infty.
$$
We assume that $K^S=G(\widehat{\ZZ}^S)$ for any sufficiently large finite set of places $S$ of $\QQ$ containing infinity.  For our later use we fix a maximal split torus $T \leq G$ and assume that the Cartan involution fixing $K_\infty$ acts as inversion on the identity component $T(\RR)^+$ of $T(\RR)$ in the real topology.
We impose the following torsion-freeness assumption:
\begin{itemize}
\item[(TF)] For all $g \in G(\A_\QQ^\infty)$ the group $g^{-1} K^\infty g \cap G(\QQ)$ is torsion-free.
\end{itemize}
This can always be arranged by taking $K^\infty$ to be contained in a sufficiently small principal congruence subgroup.

To deduce the relative Weyl law of Theorem \ref{thm-Weyl-intro}, we investigate the following special case of the setting of the previous sections of the paper:  

Let $H_0 \leq G$ be a subgroup that is a direct product of a reductive group and a unipotent group and let $H \leq G \times G$ be the image of the diagonal embedding $H_0 \hookrightarrow G \times G$. 
We point out that though $H_0(\QQ) \backslash H_0(\A_{\QQ})$ is compact, we make no such assumption on $G(\QQ) \backslash G(\A_{\QQ})$, so Theorem \ref{thm-Weyl-intro} does not follow in any obvious way from the usual Weyl law and its local variants.  Moreover, we will also show in Proposition \ref{upper} how the same asymptotic would follow for non-compact $H_0(\QQ) \backslash H_0(\A_{\QQ})$ of finite volume provided that we knew the upper bound of the relative Weyl law (in the setting of the usual Weyl law this was proven in \cite{Donnelly}).

We restate Theorem \ref{thm-Weyl-intro} for convenience:

\begin{thm}\label{thm-Weyl-main}  Assume that $H_0(\QQ) \backslash H_0(\A_{\QQ})$ is compact.
As $X \to \infty$
one has
\begin{align}
\sum_{\pi:\,\pi(\Delta) \leq X}\sum_{\varphi \in \mathcal{B}(\pi)^K}\int_{H_0(\QQ) \backslash H_0(\A_{\QQ})}|\varphi(h)|^2dh \sim \alpha(G)\mathrm{meas}_{dh}(H_0(\QQ) \backslash H_0(\A_{\QQ}))X^{d/2},
\end{align}
where the sum is over isomorphism classes of cuspidal automorphic representations $\pi$ of $G(\A_\QQ)$, $\mathcal{B}(\pi)$ is an orthonormal basis of $\pi$-isotypic subspace of $L^2_{\mathrm{cusp}}(G(\QQ) \backslash G(\A_{\QQ}))$, $\pi(\Delta)$ is the eigenvalue of the Casimir operator $\Delta$ acting on the space of $K_\infty$-fixed vectors in $\pi$, and $d=\dim (G(\RR)/K_{\infty})$. 
\end{thm}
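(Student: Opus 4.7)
My plan is to apply Theorem \ref{thm-stf} with $H = \mathrm{diag}(H_0) \hookrightarrow G \times G$ and trivial character, using a family of test functions $f_X = f_{X,\infty}\cdot e_{K^\infty}$ parametrized by $X>0$, where $e_{K^\infty} = \mathbf{1}_{K^\infty}/\mathrm{vol}(K^\infty)$ and $f_{X,\infty}\in C_c^\infty(G(\RR))$ is bi-$K_\infty$-invariant. The function $f_{X,\infty}$ will be chosen so that its spherical transform $\hat f_{X,\infty}$ localizes sharply at the spectral parameter corresponding to Casimir eigenvalue $X$; by Paley--Wiener I can keep its support in a fixed compact subset of $G(\RR)$ uniformly in $X$, while controlling the window shape via the wave-kernel construction of H\"ormander. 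Following Lindenstrauss--Venkatesh \cite{LV}, I then adjust $f_X$ by a correction term so that $R(f_X)$ has purely cuspidal image, the correction being small enough to contribute only to lower-order terms.

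Because $f_X$ is bi-$K$-invariant, $R(f_X)$ annihilates the orthogonal complement of $\pi^K$ in each cuspidal $\pi$ and acts on $\pi^K$ by the scalar $c_\pi(X) := \hat f_{X,\infty}(\pi(\Delta))$. The spectral side of Theorem \ref{thm-stf} then equals
$$
\sum_\pi c_\pi(X)\sum_{\varphi \in \mathcal{B}(\pi)^K}\int_{H_0(\QQ)\backslash H_0(\A_\QQ)}|\varphi(h)|^2\,dh,
$$
which is exactly a smoothed version of the sum appearing on the left-hand side of the theorem.

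On the geometric side, Proposition \ref{prop-finite-class} (together with the uniform boundedness of the support of $f_X$) ensures that only finitely many classes $[\gamma]$ contribute, and by shrinking the archimedean support of $f_{X,\infty}$ to a small enough neighborhood of the identity in $G(\RR)$, using hypothesis (TF) and the fact that compactness of $H_0(\QQ)\backslash H_0(\A_\QQ)$ forces the reductive part of $H_0$ to be $\QQ$-anisotropic modulo its center, I can arrange that every contributing $\gamma$ is elliptic, unimodular, and closed, so that Theorem \ref{thm-stf} applies. The identity class then contributes
$$
\tau(H_1)\cdot \mathrm{RO}_1^{\chi}(f_X) = \mathrm{meas}_{dh}(H_0(\QQ)\backslash H_0(\A_\QQ))\cdot f_{X,\infty}(1),
$$
and the spherical Plancherel theorem gives the main term $f_{X,\infty}(1) \sim \alpha(G)\,X^{d/2}$, the constant $\alpha(G)$ being exactly the Plancherel density at the trivial spectral parameter as defined in \cite{LV}.

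Finally, a Tauberian argument (sandwiching $\mathbf{1}_{\pi(\Delta)\leq X}$ between smooth windows from above and below, using positivity of the period weights) converts the smoothed asymptotic into the sharp-cutoff statement of the theorem. The two principal obstacles, as I anticipate them, are (i) carrying out the Lindenstrauss--Venkatesh cuspidal projection in this adelic setting so that the correction term adds only $o(X^{d/2})$ to both sides, and (ii) bounding the non-identity orbital integrals $\mathrm{RO}_\gamma^{\chi}(f_X)$ by $o(X^{d/2})$, which should follow from stationary-phase estimates on the spherical transform of $f_{X,\infty}$ away from the identity, combined with the uniform finiteness of contributing classes from Proposition \ref{prop-finite-class}.
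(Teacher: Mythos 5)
Your high-level plan (bi-$K$-invariant test functions with spherical transform localized at eigenvalue $X$, identity class dominating the geometric side, Tauberian step) is broadly aligned with the Lindenstrauss--Venkatesh framework the paper invokes, but the proposal has a genuine structural gap, and it misidentifies where the compactness hypothesis on $H_0(\QQ)\backslash H_0(\A_\QQ)$ actually enters.

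The paper does \emph{not} prove the asymptotic in one stroke by applying Theorem \ref{thm-stf} with a cuspidally-projected $f_X$. Instead it splits the argument into a lower bound (Proposition \ref{upper}) and an upper bound (Proposition \ref{prop-upperbound}). Proposition \ref{upper} says that the full asymptotic follows from the upper bound, and this implication is what the ``line by line'' transposition of \cite{LV} produces; it is precisely where the nontrivial Lindenstrauss--Venkatesh cuspidal-projection mechanism lives. You flag this step as ``obstacle (i)'' and assert that the correction term contributes only $o(X^{d/2})$, but that assertion is essentially the theorem of \cite{LV} itself --- it is not a crude estimate one can wave through, and you have not given a mechanism for it. Meanwhile, the upper bound (Proposition \ref{prop-upperbound}) is proved by a different argument that you have not anticipated at all: one expands the \emph{full} automorphic kernel, including the Eisenstein contribution (so Theorem \ref{thm-stf}, which presupposes cuspidal image, does not apply here), integrates over the diagonal of $H_0(\QQ)\backslash H_0(\A_\QQ)$, and uses positivity. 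This is where the compactness hypothesis is essential: it makes the continuous-spectrum contribution to the restricted kernel absolutely convergent and (by the sign of the test function) nonnegative, so the cuspidal piece is dominated by the geometric side. You instead deploy compactness only on the geometric side, to force ellipticity of contributing classes, and never touch the Eisenstein positivity argument. Without it, the upper bound --- hence the Tauberian sandwich you want --- is unavailable, since the relative analogue of Donnelly's theorem is, as the paper notes, not known. In short: the decomposition into upper and lower bounds, and the positivity use of compactness in the upper bound, are the missing ideas; the cuspidal-projection step you defer is not a lower-order correction but the core of the lower-bound half.
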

\noindent Here $\alpha(G)>0$ is the same constant appearing in \cite{LV}, and the Casimir operator and the Haar measure on $G(\RR)$ are normalized as in \cite{LV}.  The Haar measure on $G(\A_\QQ^\infty)$ is normalized to give
$K^\infty$ volume $1$.

The proof of Theorem \ref{thm-Weyl-main} follows from the observation
that if we replace the diagonal embedding $G \hookrightarrow G \times G$ considered in Lindenstrass and Venkatesh's work \cite{LV} by the diagonal embedding $H_0 \hookrightarrow G \times G$, the argument of \cite{LV} can be followed line by line to deduce the result.  In particular, one can use the same test functions that were constructed in loc.~cit.  We will give a few more details but will be quite brief.

With a view towards future generalizations, until otherwise stated we merely assume  that $H_0(\QQ) \backslash H_0(\A_{\QQ})$ has finite volume (which is not implied by the fact that $G(\QQ) \backslash G(\A_{\QQ})$ has finite volume).  

Arguing exactly as in \cite{LV} one proves the following theorem:


\begin{prop}\label{upper}
Let $H_0(\QQ) \backslash H_0(\A_{\QQ})$ be of finite volume (not necessarily compact) and let $0 < \varepsilon <1$. If we assume the upper bound of the relative Weyl law, namely, that for $X\rightarrow \infty,$ one has
$$
\sum_{\pi:\,\pi(\Delta) \leq X}\sum_{\varphi \in \mathcal{B}(\pi)^K}\int_{H_0(\QQ) \backslash H_0(\A_{\QQ})}|\varphi(h)|^2dh \leq (\alpha(G)+O(\varepsilon))\mathrm{meas}_{dh}(H_0(\QQ) \backslash H_0(\A_{\QQ}))X^{d/2},
$$
then Theorem \ref{thm-Weyl-main} follows. \qed
\end{prop}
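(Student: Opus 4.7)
The plan is to adapt the argument of Lindenstrauss and Venkatesh \cite{LV} for the absolute Weyl law to the relative setting, substituting the general simple relative trace formula of Theorem \ref{thm-stf} for the usual simple trace formula. First I would take the LV family of compactly supported test functions $f_X = f_{X,\infty}\otimes \mathbf{1}_{K^\infty}\in C_c^\infty(G(\A_\QQ))$: these are nonnegative convolution squares $h_X * h_X^*$ with $f_X(e)\sim \alpha(G)X^{d/2}$, and $\pi(f_X)$ acts on the $K$-fixed subspace of any cuspidal $\pi$ as a nonnegative self-adjoint scalar $c_X(\pi)\cdot\mathrm{Id}$ satisfying $c_X(\pi)\leq 1+O(\varepsilon)$, with $c_X(\pi)=0$ for $\pi(\Delta)>X(1+O(\varepsilon))$ and $c_X(\pi)\geq 1-O(\varepsilon)$ for $\pi(\Delta)\leq X$; they vanish on the orthogonal complement of the $K$-fixed subspace since $f_X$ is bi-$K$-invariant. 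The hypotheses needed to apply Theorem \ref{thm-stf} (namely that $R(f_X)$ have cuspidal image and that every $\gamma\in G(\QQ)$ whose $H(\A_\QQ)$-orbit meets the support of $f_X$ be elliptic, unimodular, and closed) depend only on $G$ and are precisely what is verified in \cite{LV}.

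Applying Theorem \ref{thm-stf} with $\chi=1$ and $H=\mathrm{diag}(H_0)\leq G\times G$ yields
\begin{equation*}
\sum_\pi \rtr\,\pi(f_X) \;=\; \tau(H_e)\mathrm{RO}_e^{1}(f_X) \,+\, \sum_{[\gamma]\neq [e]}\tau(H_\gamma)\mathrm{RO}_\gamma^{1}(f_X).
\end{equation*}
Because $\chi=1$ and $f_X\geq 0$, every term on the right is nonnegative. The stabilizer of $[e]$ is $H_e=H$, so $\mathrm{RO}_e^{1}(f_X)=f_X(e)$ and $\tau(H_e)=\mathrm{meas}_{dh}(H_0(\QQ)\backslash H_0(\A_\QQ))$; the identity contribution is therefore asymptotic to $\alpha(G)\,\mathrm{meas}_{dh}(H_0(\QQ)\backslash H_0(\A_\QQ))\,X^{d/2}$. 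Unwinding the left-hand side against a basis adapted to the $K$-type decomposition gives
\begin{equation*}
\sum_\pi \rtr\,\pi(f_X) \;=\;\sum_\pi c_X(\pi)\sum_{\varphi\in\mathcal{B}(\pi)^K}\int_{H_0(\QQ)\backslash H_0(\A_\QQ)}|\varphi(h)|^2\,dh\;\leq\; (1+O(\varepsilon))\sum_{\pi:\,\pi(\Delta)\leq X(1+O(\varepsilon))}\sum_{\varphi\in\mathcal{B}(\pi)^K}\int|\varphi|^2\,dh.
\end{equation*}
Combining these two facts and rescaling $X\to X/(1+O(\varepsilon))$ furnishes the lower bound
\begin{equation*}
\sum_{\pi:\,\pi(\Delta)\leq X}\sum_{\varphi\in\mathcal{B}(\pi)^K}\int_{H_0(\QQ)\backslash H_0(\A_\QQ)}|\varphi(h)|^2\,dh\;\geq\;(\alpha(G)-O(\varepsilon))\,\mathrm{meas}_{dh}(H_0(\QQ)\backslash H_0(\A_\QQ))\,X^{d/2}.
\end{equation*}

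Combined with the hypothesized upper bound of the relative Weyl law, the two inequalities pinch the sum in Theorem \ref{thm-Weyl-main} to $\alpha(G)\,\mathrm{meas}_{dh}(H_0(\QQ)\backslash H_0(\A_\QQ))\,X^{d/2}$ up to $O(\varepsilon X^{d/2})$, and sending $\varepsilon\to 0$ gives the asymptotic. The lower bound emerges solely from the identity term of the relative trace formula together with the nonnegativity of the non-identity orbital integrals; the assumed upper bound is used only to supply the matching upper estimate, which in the absolute case is known unconditionally by Donnelly \cite{Donnelly} but which in the relative setting is at present inaccessible. The main technical step to check is thus not the squeezing argument itself but the verification that the LV construction of $f_X$, originally deployed for the diagonal embedding $G\hookrightarrow G\times G$, retains the spectral, support and cuspidal-image properties needed for Theorem \ref{thm-stf}; since $H_0$ enters only through the period integral and does not influence the construction of $f_X$, this transfer is essentially automatic.
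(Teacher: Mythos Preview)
Your proposal is correct and takes essentially the same approach as the paper, which simply instructs the reader to follow \cite{LV} line by line with $\mathrm{diag}(H_0)$ in place of $\mathrm{diag}(G)$. The only imprecision is the phrase ``depend only on $G$'': the elliptic/unimodular/closed conditions are relative to $H=\mathrm{diag}(H_0)$, but since the $H_0(\A_\QQ)$-orbit of any $\gamma$ lies inside its $G(\A_\QQ)$-conjugacy class, the support control from \cite{LV} (via the assumption (TF)) still forces $\gamma=e$, which is elliptic, unimodular, and closed under the finite-volume hypothesis on $H_0(\QQ)\backslash H_0(\A_\QQ)$.
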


In \cite{LV}, the upper bound of Proposition \ref{upper} follows from work of Donnelly \cite{Donnelly}.  Interestingly, the corresponding relative analogue is not known.  However, in case where $H_0(F) \backslash H_0(\A_F)$ is compact one can establish the following result using standard techniques:

\begin{prop}\label{prop-upperbound}  Suppose that  $H_0(\QQ) \backslash H_0(\A_{\QQ})$ is compact and that $0 < \varepsilon<1$.  With notation as in Theorem \ref{thm-Weyl-main}, for $X \in \RR_{>0}$ one has the upper bound:
$$
\sum_{\pi:\,\pi(\Delta) \leq X}\sum_{\varphi \in \mathcal{B}(\pi)^K}\int_{H_0(\QQ) \backslash H_0(\A_{\QQ})}|\varphi(h)|^2dh \leq (\alpha(G)+O(\varepsilon))\mathrm{meas}_{dh}(H_0(\QQ) \backslash H_0(\A_{\QQ}))X^{d/2}.
$$

\end{prop}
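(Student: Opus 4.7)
The plan is to apply the simple relative trace formula of Theorem \ref{thm-stf}, with $\chi$ trivial and $H \leq G \times G$ the diagonal image of $H_0$, to precisely the family of test functions $f_X \in C_c^\infty(G(\A_\QQ))$ constructed by Lindenstrauss--Venkatesh in \cite{LV}. These functions are bi-$K$-invariant and have the following properties built into their construction: (i) $R(f_X)$ has cuspidal image, (ii) the spectral multiplier $\widehat{f_X}(\pi)$ is nonnegative and $\widehat{f_X}(\pi) \geq 1$ whenever $\pi(\Delta) \leq X$, (iii) $f_X(e) = (\alpha(G) + O(\varepsilon))X^{d/2}$ as $X \to \infty$, and (iv) the support of $f_X$ is contained in a fixed compact set depending only on $\varepsilon$. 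The torsion-freeness condition (TF) and the split adjoint hypothesis on $G$ are used, as in \cite{LV}, to arrange (i) via the usual truncation trick.

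Once this test function is in hand, the spectral side of Theorem \ref{thm-stf} becomes, using bi-$K$-invariance to diagonalize $\pi(f_X)$ on $K$-fixed vectors,
$$
\sum_\pi \rtr\,\pi(f_X) \;=\; \sum_\pi \widehat{f_X}(\pi)\sum_{\varphi \in \mathcal{B}(\pi)^K}\int_{H_0(\QQ)\backslash H_0(\A_\QQ)}|\varphi(h)|^2\,dh.
$$
Properties (ii) and nonnegativity of each inner term then yield
$$
\sum_{\pi:\,\pi(\Delta)\leq X}\sum_{\varphi \in \mathcal{B}(\pi)^K}\int_{H_0(\QQ)\backslash H_0(\A_\QQ)}|\varphi(h)|^2\,dh \;\leq\; \sum_\pi \rtr\,\pi(f_X).
$$

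For the geometric side, the identity orbit $\gamma = e$ has stabilizer $H_e = H$ (the diagonal $H_0$), so its contribution is $\tau(H_e)f_X(e) = \mathrm{meas}_{dh}(H_0(\QQ)\backslash H_0(\A_\QQ))f_X(e)$, giving by (iii) the target main term $(\alpha(G)+O(\varepsilon))\mathrm{meas}_{dh}(H_0(\QQ)\backslash H_0(\A_\QQ))X^{d/2}$. It therefore remains to show that the contribution of the non-identity relevant elliptic closed unimodular classes $\gamma \in \Gamma(\QQ)$ is $O(\varepsilon X^{d/2})$.

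This last step is the main obstacle, and it is where the compactness hypothesis on $H_0(\QQ)\backslash H_0(\A_\QQ)$ enters decisively. Compactness guarantees $\tau(H_\gamma) < \infty$ for every $\gamma$, and combined with Proposition \ref{prop-finite-class} it also ensures that only finitely many closed classes, uniformly in $X$, intersect the (fixed in $X$) support of $f_X$. We thus only need to bound each individual relative orbital integral $\mathrm{RO}_\gamma^{\chi}(f_X)$, and for this we follow \cite{LV} verbatim: factor $f_X = f_X^\infty \cdot f_{X,\infty}$ into a fixed non-archimedean function and an archimedean spherical function, and estimate $\mathrm{RO}_\gamma^\chi(f_X)$ by pulling back $f_{X,\infty}$ to the closed orbit $H_{0}(\RR)\cdot \gamma$ and invoking the archimedean bounds of loc.~cit.\ (spherical Plancherel / heat-kernel estimates), which give a contribution of size $o(X^{d/2})$ for every non-identity closed orbit. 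Assembling these pieces completes the bound with the constant $\alpha(G) + O(\varepsilon)$.
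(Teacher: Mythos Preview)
Your approach diverges from the paper's at the very first step, and the divergence creates a genuine gap. The paper does \emph{not} invoke Theorem~\ref{thm-stf} here; in particular it does not require $R(f_X)$ to have cuspidal image. Instead one takes a positive-definite test function (essentially the archimedean spherical function of \cite[Lemma~2]{LV}, with part~(4) modified from $1-\varepsilon$ to $1+\varepsilon$, times $\one_{K^\infty}$ at the finite places, \emph{without} the Hecke-operator subtraction) and expands the full spectral kernel, Eisenstein and residual terms included. Those non-cuspidal contributions are then simply discarded: they are nonnegative because the test function is positive, and their period over $H_0(\QQ)\backslash H_0(\A_\QQ)$ is absolutely convergent precisely because that quotient is compact. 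This---not any statement about $\tau(H_\gamma)$ or orbital integrals---is where the compactness hypothesis enters in the paper's argument.

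Your claimed properties (i) and (ii) are not simultaneously furnished by \cite{LV}, and this is the gap. Cuspidality of $R(f_X)$ is achieved in \cite{LV} by subtracting Hecke operators at auxiliary primes; after that subtraction the finite-place multiplier on cuspidal $\pi$ lies only in an interval of the form $[1-\varepsilon,1+\varepsilon]$, so the total multiplier is not $\geq 1$ on the target range, and nonnegativity on the full cuspidal spectrum is not asserted. The inequality $\widehat{f_X}(\pi)\geq 1-\varepsilon$ is exactly what one needs for a \emph{lower} bound (this is Proposition~\ref{upper}), but for an \emph{upper} bound the inequality runs the wrong way. That is why the paper abandons the cuspidal-image requirement altogether and controls the continuous spectrum by positivity; your route through Theorem~\ref{thm-stf} forecloses that option. (Separately, applying Theorem~\ref{thm-stf} would also oblige you to check that every $\gamma\in G(\QQ)$ whose $H$-orbit meets $\mathrm{supp}(f_X)$ is elliptic, unimodular and closed, which you have not done; compactness of $H_0(\QQ)\backslash H_0(\A_\QQ)$ does not by itself guarantee closedness of $H_0$-conjugacy orbits in $G$.)
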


\begin{proof}
One can mimic the argument in \cite[\S 5]{LV}.   There are only two minor differences between the argument in loc.~cit. and the argument proving the proposition above.  First, in \cite[Lemma 2(4)]{LV} one replaces $1-\varepsilon$ with $1+\varepsilon$, since we are interested in upper bounds.  Second, one has to include Eisenstein series in the expansion of the spectral kernel. However, unlike in the usual trace formula, their contribution is absolutely convergent in the setting above because we have assumed $H_0(\QQ) \backslash H_0(\A_{\QQ})$ is compact.  This contribution is also positive by the choice of test function in loc.~cit.  
\end{proof}

Combining Proposition \ref{prop-upperbound} and  Proposition \ref{upper} yields Theorem \ref{thm-Weyl-main}.


\end{document}